\documentclass[11pt]{article}
\usepackage[english]{babel}
\usepackage{amsthm}
\usepackage{epsfig}
\usepackage{xcolor}

\newtheorem{theorem}{Theorem}[section]
\newtheorem{lemma}[theorem]{Lemma}
\begin{document}
%


\title{\bf Dynamical Clustering and its Limiting Differential Equations}
\author{Paul A. Milewski, Esteban G. Tabak}

\maketitle \pagestyle{plain} \pagenumbering{arabic}

\begin{abstract}
A novel, non-parametric clustering algorithm is developed, based on flows in the space of soft assignments $P_k^i$, representing the probability that point $x^i$ belongs to class $k$. These flows have two drivers: a nonlinear reaction component  that relaxes $P$ to an evolving Bayesian-like posterior, and a diffusive component that relaxes $P$ to its local average, with a diffusivity $\nu$ that adapts dynamically so as to balance the two components. The methodology extends to semi-supervised classification, where a subset of the labels are known. In the limit of infinitely many observations, it yields a set of non-standard reaction-diffusion equations, which produces sharp boundaries between species that separate spontaneously into well-balanced domains.  Including more than one diffusive network opens the way to a broader class of applications, including the detection of regime changes in time series and a clustering procedure based on numerous features that defeats the curse of dimensionality. In the continuous limit, this extension results in a novel, non-local class of diffusive operators.

\end{abstract}

\section{Introduction}

Assigning labels $y$ to observations $x$ is a central task in data science \cite{Bishop, Hastie}, a typical instance being the assignment of a diagnostic to the results of a clinical test. Supervised learning, which bases this assignment on a training set of pairs $\{x^i, y^i\}$, is named classification or regression depending on whether the label is categorical or continuous. In unsupervised learning, the training set consists only of the observations $\{x^i\}$, and the assignment of labels acquires different connotations: factor discovery when the $y$ are latent variables that explain part of the variability in $x$, and dimension or complexity reduction when $y$ provides an economical or interpretable representation for $x$,  named clustering when the labels $y$ are categorical. Semisupervised learning combines the two previous scenarios, with data that includes a set of observations $\{x^i\}\ i \in \{1, 2, \ldots, m\}$ and only a partial list of labels $y^i, \ i \in I = \{i_1, i_2, \ldots, i_n\}$, $n < m$. Then the assignment of labels $y$ to the complement of the labeled set $I$ is based both on the available pairs and on the distribution underlying the observations.


This article proposes a new methodology for the unsupervised and semisupervised learning of categorical labels $y \in \{y_1, \ldots, y_K\}$, i.e. clustering and semisupervised classification, based on flows in probability space. The labeling proposed adopts the form of a soft assignment $P_k^i = P(y_k|x^i)$. Here the $\{x^i\}$ are samples of the distribution $\rho$ of a multidimensional variable $X$ in a metric space, the $\{y_k\}$ are the labels, and $P_k^i$ represents the probability that sample $x^i$ has label $y_k$, satisfying
$$ 0 \le P_k^i \le 1, \quad \sum_{k=1}^K P_k =1. \label{constraint}$$
In the semisupervised case, the $P_k^i$ are provided for $i \in I$. 

In the method proposed, the $P_k^i$ (either all of them for unsupervised problems, or the unknown set for semisupervised problems) are initialized to non-informative values and evolve in an algorithmic time $t$ through a set of differential equations that converge to a final assignment.



The dynamics for the $\{P_k^i\}(t)$ has two components. The first component is a continuous non-parametric Bayesian-like update, which relaxes the $\{P_k^i\}$ toward a posterior $\{\tilde{P}_k^i\}$. The determination of the posterior uses the current $\{P_k^i\}$ both as a Bayesian prior and as weights to build an estimation for the distribution $\rho_k(x)$ for each class $k$. The non-parametric nature of this estimation makes this update insensitive to any metric information in the space $X$ of observations. In order to restore this information, a second, diffusive component is added to the dynamics, which relaxes $P_k^i$ toward the values of its neighbors in $X$-space. This approach turns out to be very rich, permitting for instance the use of more than one notion of neighborhood, which when clustering time series, allows the $\{P_k^i\}$ to diffuse both in $X$ and in the (real) time variable $T$, and defeats the curse of dimensionality in classification problems with numerous features.

The procedure requires as input:
\begin{enumerate}
  \item Observations: samples $\{x^i \in X\}, \ i \in \{1, 2, \ldots, m\}$ of the underlying distribution $\rho(x)$,
  \item The cardinality $K$ of the categorical labels $Y = \{y_k\}$ sought,
  \item For semisupervised problems, the probability $P_k^i \in [0, 1]$ for $i \in I \subset \{1, 2, \ldots, m\}$ that sample $x^i$ has label $y_k$,
  \item One or more notions of distance $d_{ij}^l$ between pairs of points $(x^i, x^j)$ in $X$. If more that one distance $d^l$ is provided, also a weight $\lambda^l$ assigned to each.
\end{enumerate}
It provides as output $\{P_k^i\} \in [0, 1]$, with $\sum_{k=1}^K P_k^i = 1$. Even though the $\{P_k^i\}$ can be conceptualized as probabilities, the default choice for the algorithm's parameters for clustering returns hard assignments $P_k^i \in \{0, 1\}$, with the intermediate soft assignments $P_k^i(t)$ used only as computational tools, much as in interior point methodologies for optimization. 

The first part of the paper derives this system of data-driven differential equations. In the second part we write the 
resulting system for the evolution of the $P_k^i(t)$ in a continuum limit as the number of observations grows. This yields a non-local system of reaction-diffusion equations 
$$
\frac{\partial P_k}{\partial t} =  R_k + \nu D_k.
$$
Here $P_k(x, t)$ is the continuum limit of $P_k^i(t) \rightarrow P_k(x^i, t)$, satisfying at all times the conditions
$$ 0 \le P_k(x, t) \le 1, \quad \sum_{k=1}^K P_k(x, t) = 1. $$
The probability density $\rho(x)$ represents the distribution underlying the samples $\{x^j\}$. The right-hand side of the system includes a non-local reaction term $R$ derived from the Bayesian update, and a diffusion term enforcing a distance metric. The reaction term is 
$$ R_k(x, t) = \left(\frac{\frac{P_k}{Z_k}}{\sum_{h=1}^K \frac{{P_h}^2}{Z_h}}  - 1  \right) P_k $$
where
$$ Z_k(t) = \int P_k(x, t) \ \rho(x) \ dx $$
represents the total probability of class $k$. The diffusive term is
$$ D_k(x, t) = \frac{1}{\rho} \nabla \cdot \left(\rho \nabla P_k\right),$$
with a diffusivity $\nu$ that is adjusted over time through
$$ \nu = \sqrt{\frac{\sum_k \int R_k^2 \rho\ dx}{\sum_k \int D_k^2 \rho\ dx}}, $$
so as to strike an adaptive balance between the reactive and diffusive component of the system.

This system displays interesting dynamics that makes it an object of study in its own right, in addition to shedding light on the nature of the clustering algorithm.

Even though the methodology proposed relates in different ways to $k$-means \cite{KM}, to expectation-maximization density estimation through mixtures \cite{EM},  to diffusive maps \cite{DM} and to normalizing-flow-based clustering procedures \cite{NF}, it is fundamentally different from them and, to the extent of our knowledge, from existing procedures for clustering and semi-supervised classification.
The system in the continuous limit can be compared to more standard reaction-diffusion systems, such as those proposed as models for species competition \cite{GSC, SC}. In contrast to those, the boundaries between classes that it yields are sharp and classes separate spontaneously into well-balanced domains even in the absence of boundary conditions.

\section{Evolutionary equations for clustering assignments} 

In order to motivate and derive the system ruling the time evolution of the $P_k^i(t)$, we first propose a parametric, continuous in time k-means-like clustering algorithm based on a gradual relaxation to Bayesian updates. Extending this procedure to a nonparametric setting loses information on the geometry of the observations $x$, which is then  restored through the addition of a diffusive component to the flows.

\subsection{Parametric time-evolution clustering} \label{ParamEq}

This subsection describes a Bayesian-driven, parametric algorithm for clustering that motivates the non-parametric  methodology that follows.
In this algorithm, the $P_k^j(t)$ are used as weights to estimate a probability density $\rho_k(x, t)$ per class.
For instance propose that the $\rho_k$ are Gaussians, use the weights
$$ w_k^j (t) = \frac{P_k^j}{\sum_{j=1}^m P_k^j} $$
corresponding to the fractional contribution of $x_j$ to class $k$ to estimate the mean and covariance matrix of each class:
$$
 \mu_k(t) = \sum_j w_k^j x^j, \quad \Sigma_k(t) = \sum_j w_k^j \left(x^j-\mu_k\right) \left(x^j-\mu_k\right)^{\hbox{\small{T}}},
$$
and set
\begin{equation}
 \rho_k(x,t) \sim \mathcal{N}(\mu_k, \Sigma_k).
 \label{Gaussians}
\end{equation}
Since the $w_k^j$ depend on $t$, all quantities which follow do too, which we will not indicate explicitly in what follows. Then, considering the current $P_k^j$ as a prior, one can compute the posterior through Bayes' formula:
$$ \tilde{P}_k^j = \frac{P_k^j \ \rho_k\left(x^j\right)}{\sum_{h=1}^K P_h^j \ \rho_h\left(x^j\right)} $$
and relax $P_k^j$ smoothly towards $\tilde{P}_k^j$ with $\frac{d P_k^j}{dt} = \tilde{P}_k^j - P_k^j$. This gives rise to the nonlinear system of differential equations
\begin{equation}
 \frac{d P_k^j}{dt} = \left(\frac{\rho_k\left(x^j\right)}{\sum_{h=1}^K P_h^j \ \rho_h\left(x^j\right)}  - 1  \right) P_k^j.
\label{Pt1}
\end{equation}
The initial conditions can be chosen as the uniform distribution $P_k^j = \frac{1}{K}$  (or another prior if available) plus a small random perturbation required to break the symmetry among classes. Equation (\ref{Pt1}) is a system of $m \times K$ ordinary differential equations where the $K$ constraints (\ref{constraint}) can be used to reduce the size of the system to $(m-1)\times K$. 
An approximate solution may be obtained by implementing a simple forward Euler approximation
\begin{equation}
 \frac{\left(P_k^j\right)^{n+1}-\left(P_k^j\right)^n}{\Delta t} =
 \left(\frac{\rho_k\left(x^j, t_n\right)}{\sum_{h=1}^K \left(P_h^j\right)^n \ \rho_h\left(x^j, t_n\right)}  - 1  \right) \left(P_k^j\right)^n,
 \label{Pt1_disc}
\end{equation}
where $\left(P_k^j\right)^n$ is the approximate solution at algorithmic time $t=t_n$. Figure \ref{fig:Gaussians} displays a successful application of this algorithm to a situation where the distributions underlying three clusters are indeed Gaussian. 

\begin{figure}[!htpb]
\begin{center}
\begin{tabular}{lll}
\includegraphics[width=.49\textwidth]{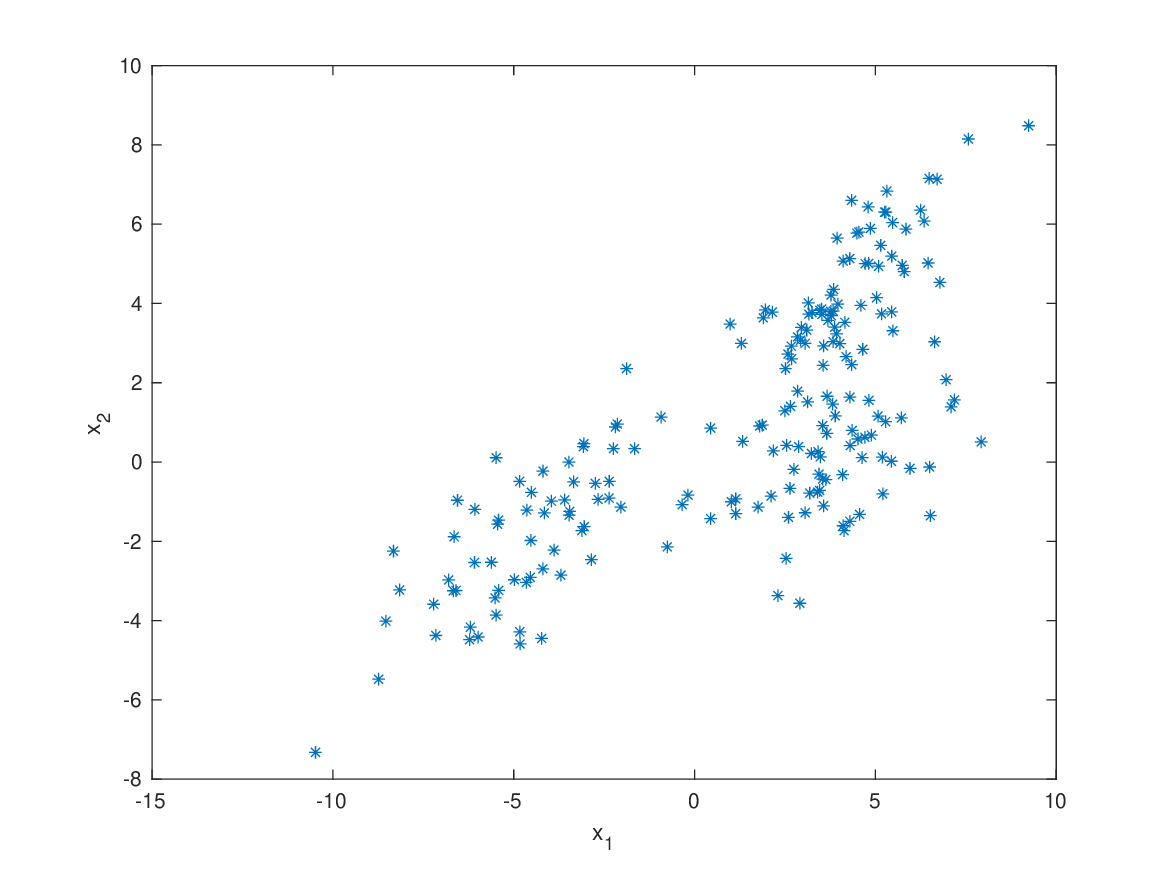} &
\includegraphics[width=.49\textwidth]{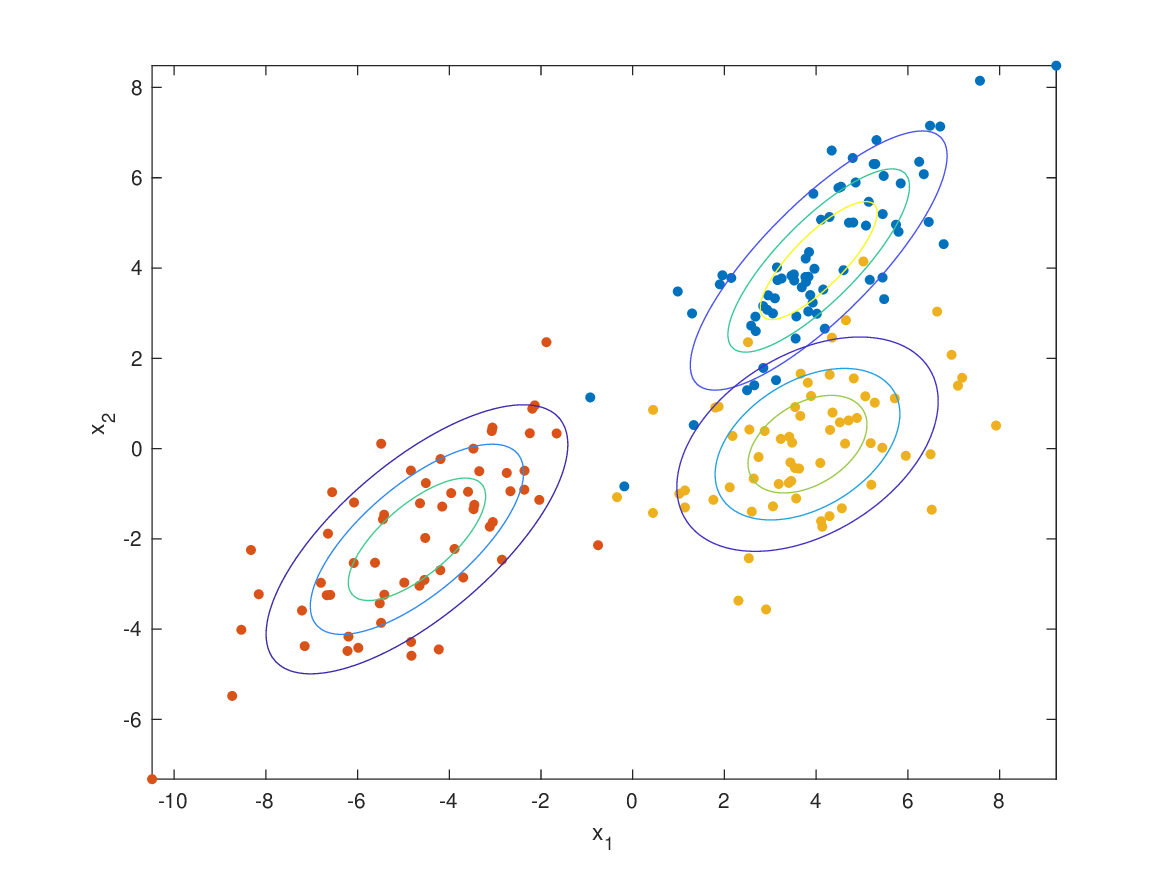} \\
\includegraphics[width=.49\textwidth]{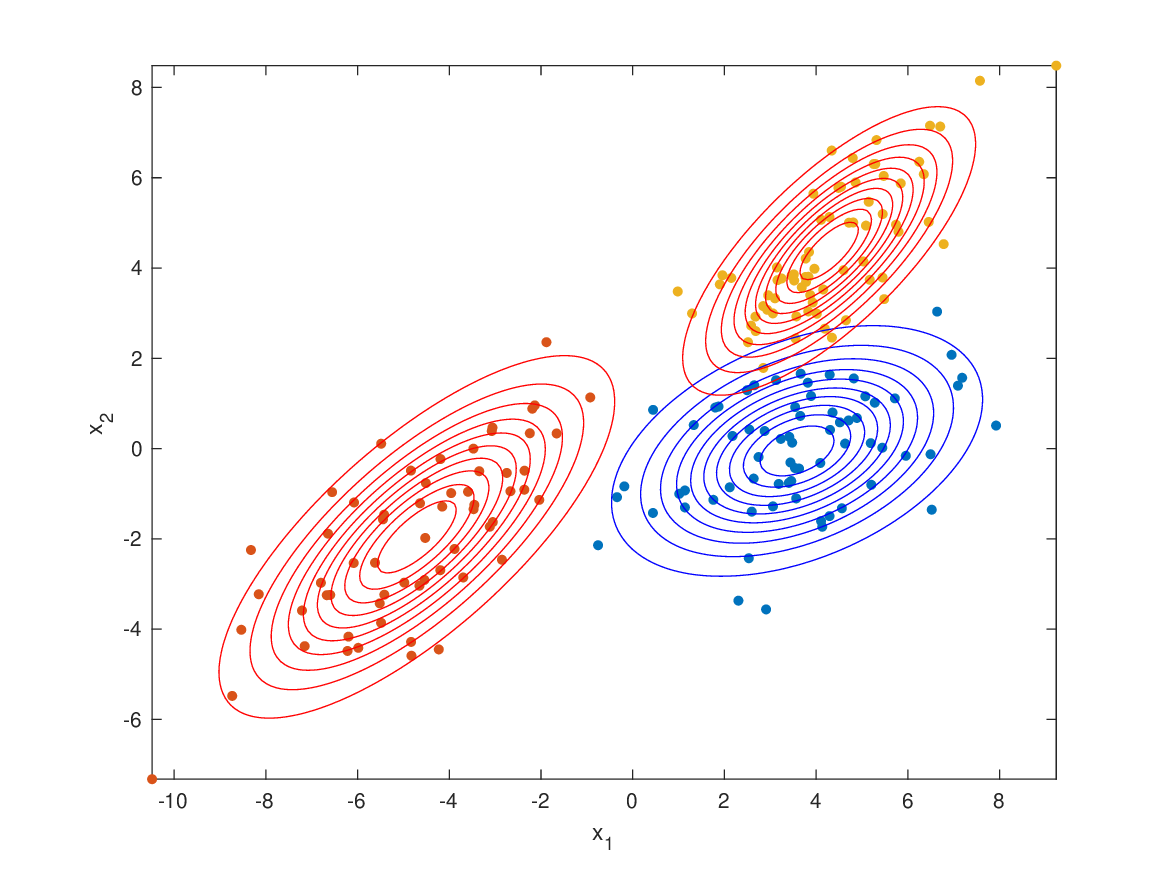} &
\includegraphics[width=.49\textwidth]{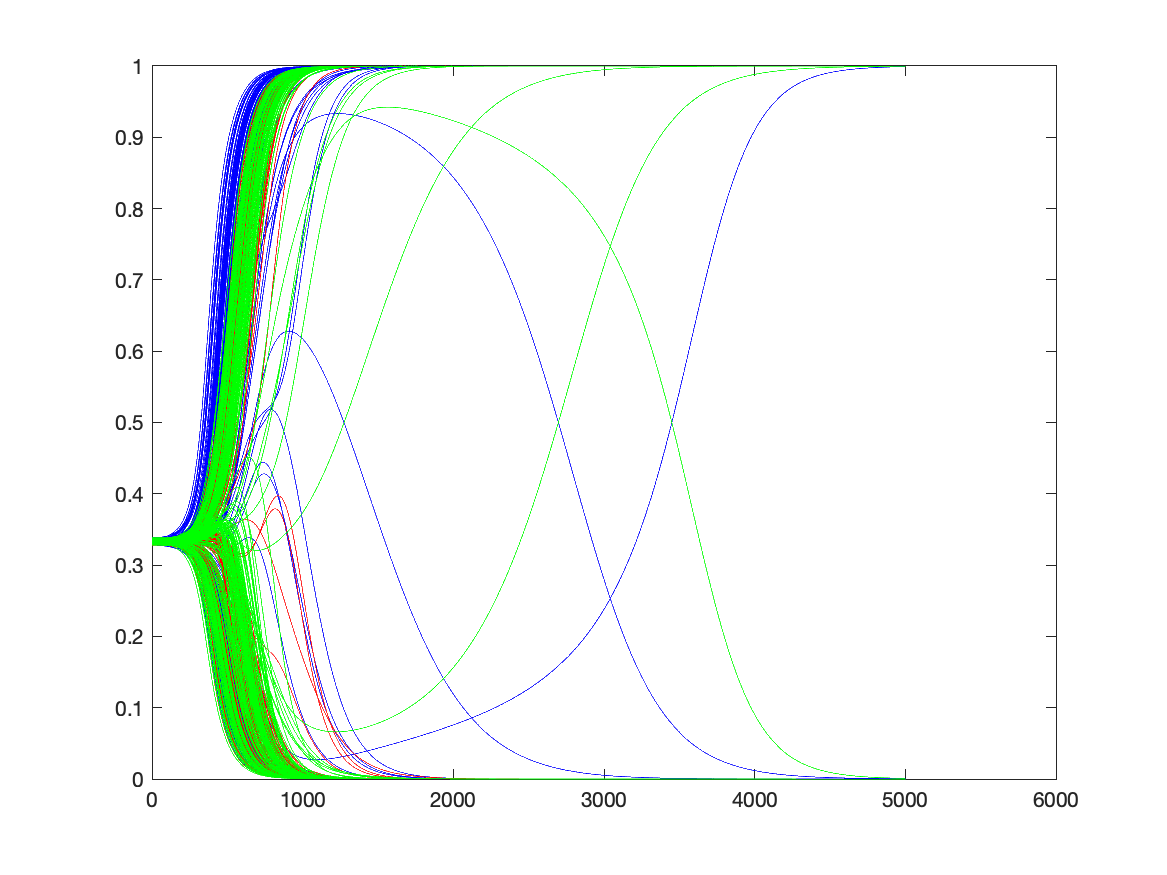} \\
\end{tabular}
\caption{Three Gaussian clusters. Top left: sample points from three Gaussian distributions in the plane. Top right: same samples, colored by class and overlayed with contours of the generating distributions. Bottom left: class assignment by the algorithm and corresponding estimated distributions. Bottom right: probabilities $P_k^j(t)$. Notice that these all asymptote to zero or one, i.e. all final assignments are rigid. Notice also how some samples are --correctly-- assigned to classes whose center is not the closest in Euclidean norm, something that $k$-means cannot achieve.
}
\label{fig:Gaussians}
\end{center}
\end{figure}

Before switching to a non-parametric setting, it may be useful to place this parametric algorithm in context by comparing it to two standard related algorithms, k-means and expectation-maximization (EM). 
In $k$-means,
\begin{enumerate}
\item The assignment to classes is hard at each step, so $P_k^j \in \{0,1\}$ and $w_k^j \in \left\{0,\frac{1}{m_k}\right\}$, where $m_k$ is the number of samples currently assigned to class $k$.

\item Only the means $\{\mu_k\}$ are estimated, not the covariance matrices $\{\Sigma_k\}$. 
\item Instead of estimating $\tilde{P}_k^j$ through Bayes theorem, one assigns $\tilde{P}_k^j = 1$ to the class $k$ with mean $\mu_k$ closest to $x^j$.
Thus points may be assigned to clusters under whose distribution they have small probability.

\item One adopts $\left(P_k^j\right)^{n+1} = \left(\tilde{P}_k^j\right)^n$, rather than relaxing $P_k^j$ toward $\tilde{P}_k^j$ through a time-dependent flow. 

\end{enumerate}

An alternative comparison contrasts the evolution (\ref{Pt1}) above with EM based density estimation by Gaussian mixtures. This does not cluster the data, but produces an estimation of $\rho(x)$ consisting of a mixture of $K$ Gaussians with weights $p_k$, i.e. find $p_k,\tilde{\mu}_k, \tilde{\Sigma}_k$ such that 
$$ \rho(x) = \sum_k p_k \; \mathcal{N}(\tilde{\mu}_k, \tilde{\Sigma}_k).$$
The key difference between the two procedures is the prior used in Bayes formula. Whereas the algorithm described uses the current value of $P_k^j$ as prior, EM uses $p_k$ (i.e. the global probability of sampling from class $k$). The two are related by
$$ p_k = \frac{1}{m} \sum_j P_k^j . $$

Thus EM produces an estimation of the global density $\rho(x)$, consisting of a mixture of $K$ Gaussians from which soft assignments can be inferred, while the algorithm above produces a characterization, where each sample $i$ has its own hard assignment to a class (i.e $P_k^j$ converging to either zero or one), in addition to an estimate of each cluster density $\rho_k(x)$. The hard assignment results from the nonlinear feedback in (\ref{Pt1}).

The choice of Gaussian distributions in our first example limits the procedure's applicability, as the clusters sought may have arbitrary distributions, with nontrivial shape and topology. Figure \ref{fig:Spirals} (top-left) shows an example where the two true clusters spiral around each other, which the algorithm cannot possibly capture (top-right). We may remedy this through a more general parametric algorithm using kernel density estimators, which replace the Gaussian distributions $\rho_k$ in (\ref{Gaussians}) by 
$$ \rho_k(x,t) = \sum_l w_k^l (t) \; K\left(x, x^l\right), \quad K(x, y) \ge 0, \quad \int_x K(x, y)\ dx = 1,$$
with a kernel $K$ of choice. For example, we choose isotropic Gaussians with tunable bandwidth $\sigma$ 
$$K=\frac{1}{\sqrt{2\pi\sigma}}e^{|x-x_l|^2/\sigma^2}$$ 
in the example of Figure 2. This kernel-based extension is not very costly, as the kernels are only applied to pairs of sample points, so the matrix
$$ K_j^l = K\left(x^j, x^l\right) $$
can be pre-computed before implementing the flow in (\ref{Gaussians}),  which requires repeated evaluations of
$$ \rho_k(x^j) = \sum_l w_k^l \; K\left(x^j, x^l\right) = \sum_l w_k^l \; K_j^l.$$

\begin{figure}[!htpb]
\begin{center}
\begin{tabular}{lll}
\includegraphics[width=.45\textwidth]{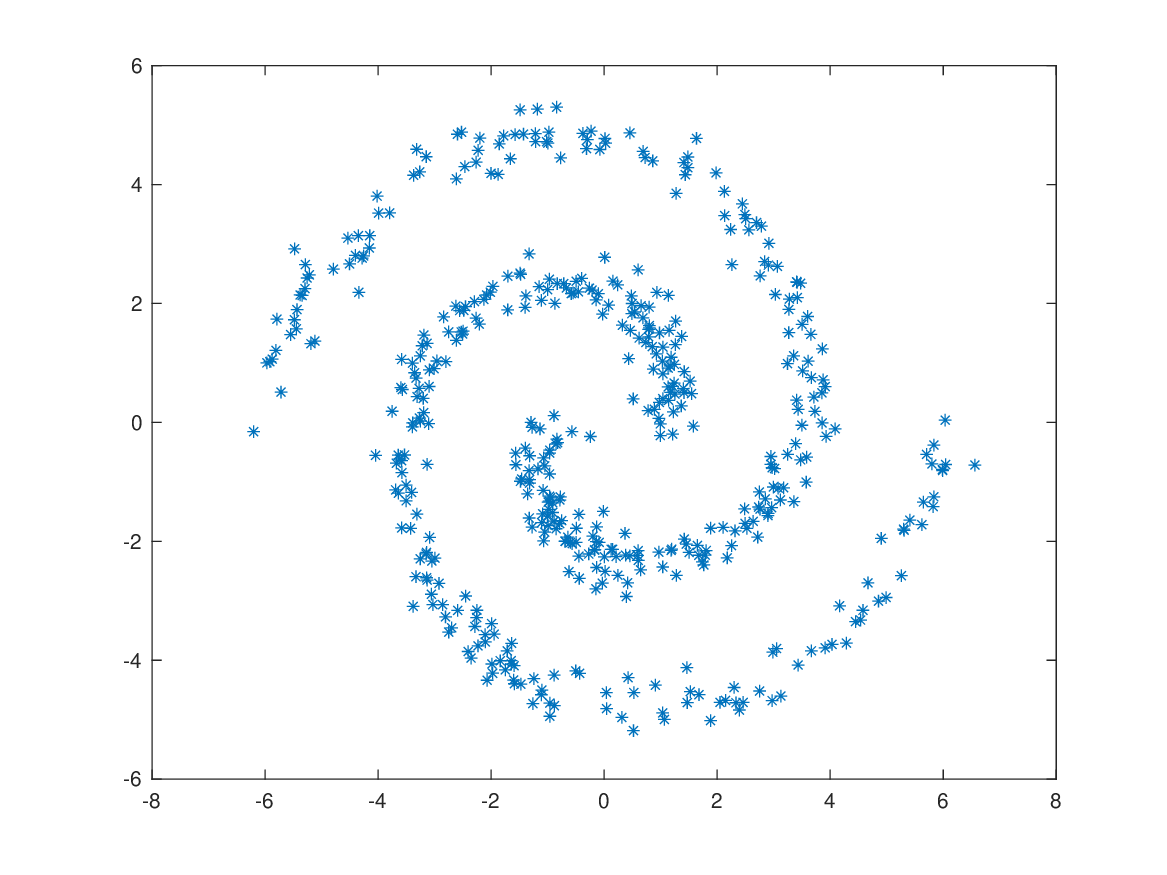} &
\includegraphics[width=.45\textwidth]{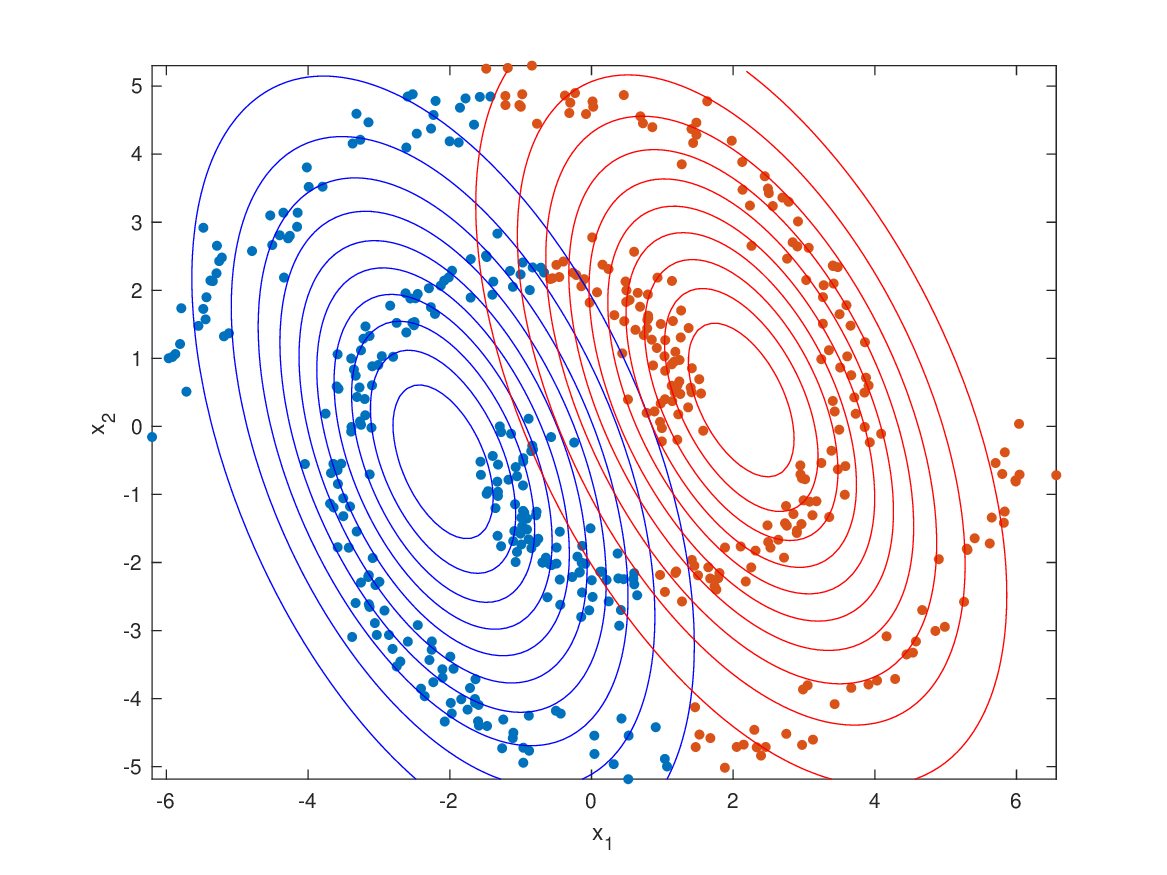} \\
\includegraphics[width=.45\textwidth]{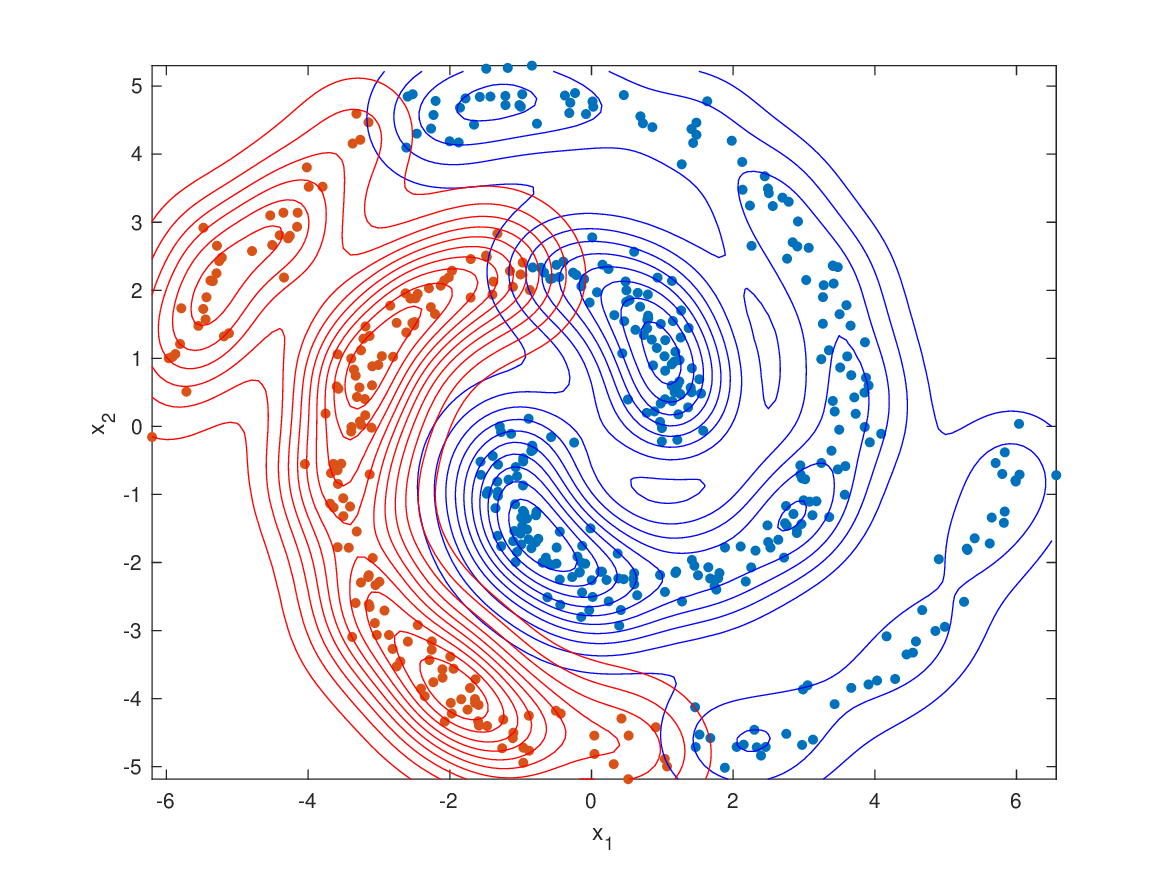} &
\includegraphics[width=.45\textwidth]{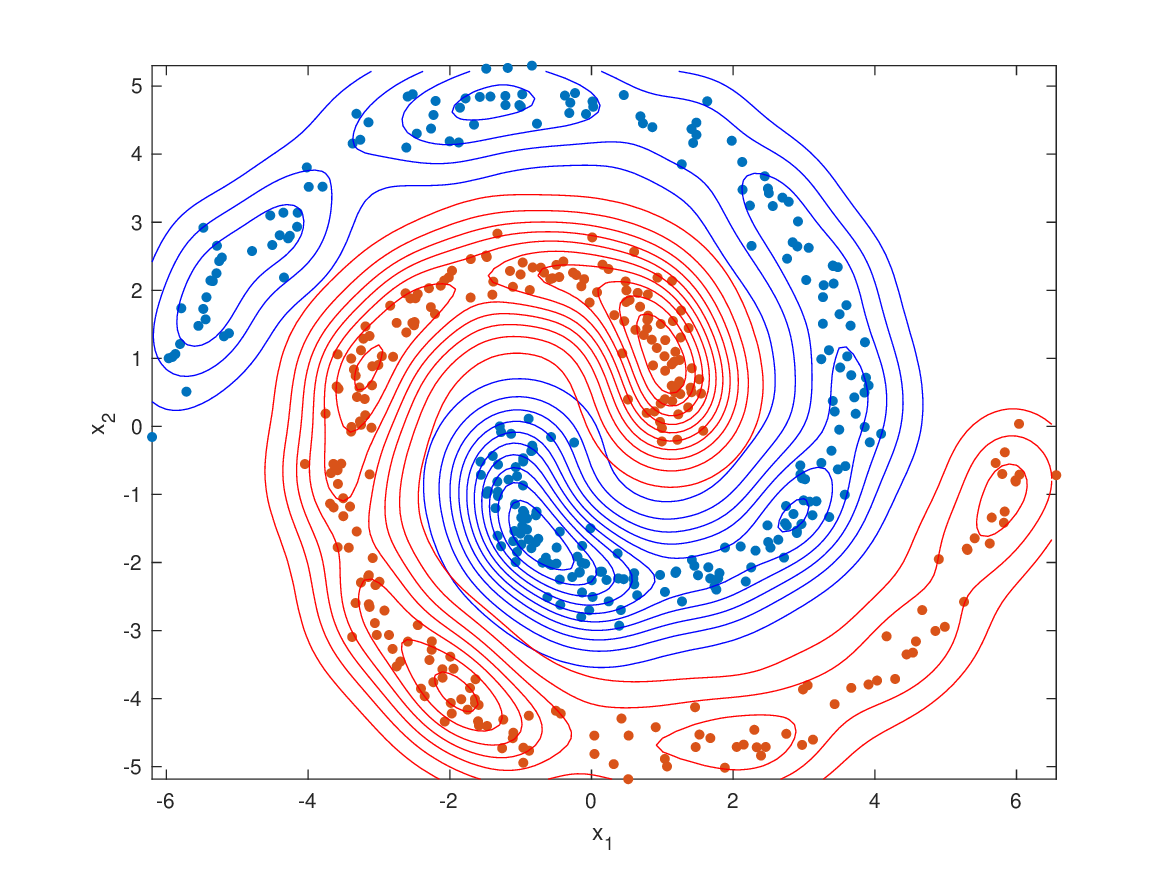} \\
\includegraphics[width=.45\textwidth]{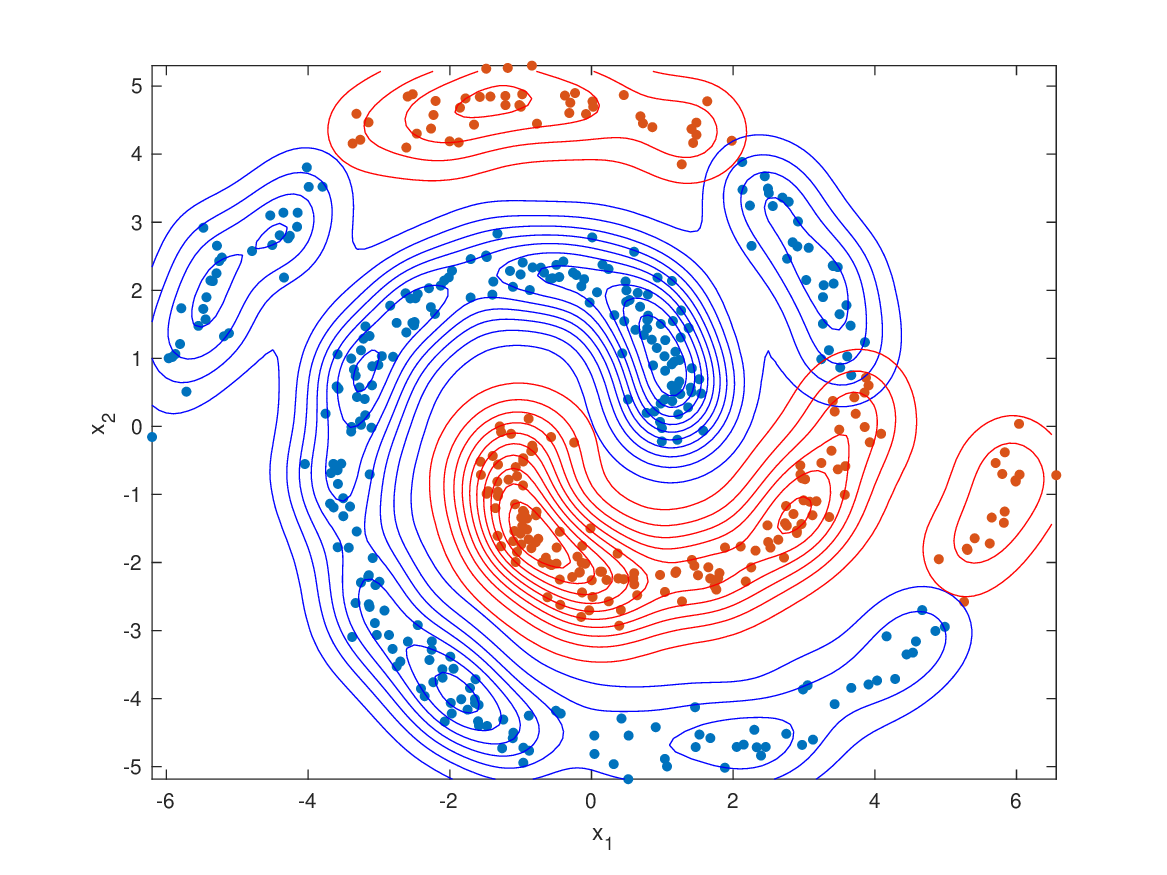} &
\includegraphics[width=.45\textwidth]{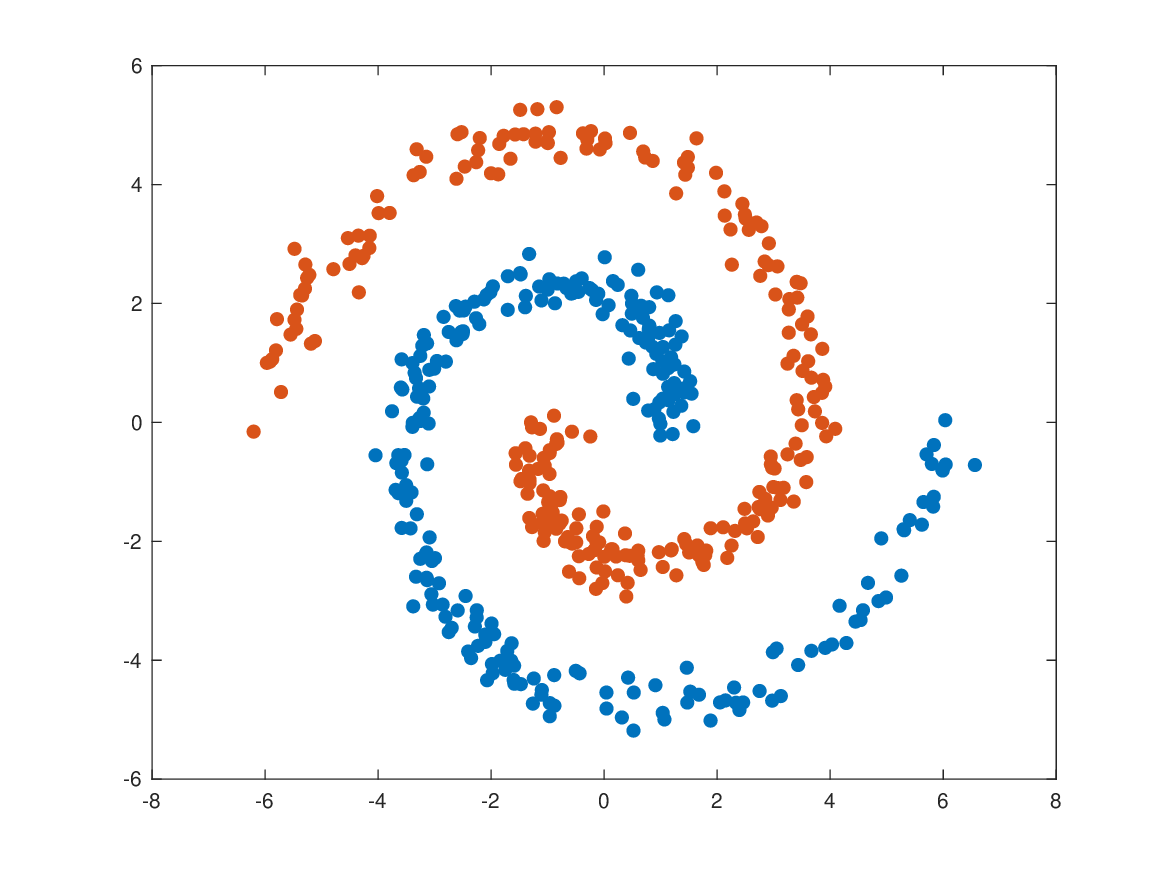} \\
\end{tabular}
\vskip-0.5cm
\caption{Two interlocked spirals. Top left: raw data. Top right: parametric clustering using Gaussians. Middle left: semi-parametric clustering using kernel density estimation, with bandwidth $\sigma = 0.7$. Even though the estimated $\rho_k$ are locally better tuned to the actual data, the procedure still performs a roughly linear partition of the samples.  Middle right: same, with $\sigma = 0.6$. This time the correct clustering is obtained, showing that the true distribution underlying the data is within the reach of kernel density estimation. However, results are sensitive to bandwidth and initialization: the solution landscape is too rich in suboptimal classifications for the algorithm to arrive consistently at the correct one. Bottom left: same, with $\sigma = 0.5$. For smaller bandwidths, each spiral is divided into segments, often joined to nearby segments from the other spiral. Bottom right: class assignment by the non-parametric procedure described herein, not associated to a density estimation per class.}
\label{fig:Spirals}
\end{center}
\end{figure}

While this approach is more flexible, it requires careful choice of parameters: figure \ref{fig:Spirals} shows how, depending on the bandwidth $\sigma$, the algorithm either behaves similarly to its Gaussian counterpart, roughly dividing space into two half-planes (middle-left), or segments the spirals into smaller pieces (bottom-left). In this example, while a precisely tuned value of the bandwidth yields the correct assignments (middle-right), the solution is sensitive to small changes in the bandwidth, the value of $\Delta t$ and the initial, nearly uniform, random soft assignment to classes. The plots also display the corresponding kernel density estimations, which are consistent with the resulting clusters. 

The landscape associated with kernel density estimation is too highly non-convex to give robust clustering results for complex underlying distributions.
 More generally, a procedure grounded on density estimation is not ideal: one can easily conceive situations, such as high-dimensional settings, where robust density estimation is beyond reach, yet clustering still makes sense. Moreover, the data at hand will not always lie in smooth manifolds $X$ where a density can be defined: discrete variables such a gender, day of the week and country of origin are ubiquitous in data bases. With this in mind, the next section introduces a purely non-parametric approach, which bypasses density estimation altogether.

\subsection{Measure driven non-parametric time-evolution} \label{ContEq}

The algorithm of the prior subsection was based on a model for the probability distributions $\rho_k$ within each class, which were Gaussians in the first implementation shown and weighted sums of Gaussian kernels in the second. In order to move beyond these parametric and semi-parametric approaches, consider first an ideal scenario with infinitely many samples $\{x^j\}$ available or, equivalently, with a known probability measure $\rho(x)$ underlying the data. Then the soft assignment $P_k^j(t)$ sought is replaced by a function $P_k(x, t)$, the weights are given by
\begin{equation}
 w_k(x, t) =  \frac{P_k(x, t)}{Z_k(t)}, \quad Z_k(t) = \int P_k(x ,t)\ d\rho(x), 
 \label{Zdef}
\end{equation}
and the conditional probability estimation step yields
\begin{equation}
 \rho_k(x, t) = w_k(x, t) \rho(x)  = \frac{P_k(x, t) \rho(x)}{Z_k(t)}.
 \label{DE}
\end{equation}
Formula (\ref{DE}) can be considered as a natural generalization of the weighted parametric density estimation, where we are weighting the full probability distribution. It is also an instance of Bayes theorem, derivable from the two alternative ways in which one can write the joint probability for $y_k$ and $x$:
$$ \hbox{Prob}\left(y_k, x\right) = \rho(x) P_k(x) = Z_k\ \rho_k(x) . $$
The equation for the posterior then transforms as follows:
$$ \tilde{P}_k^j = \frac{P_k^j \; \rho_k\left(x^j\right)}{\sum_{h=1}^K P_h^j \ \rho_h\left(x^j\right)} \rightarrow \tilde{P}_k(x) =  \frac{P_k(x) \frac{P_k(x) \rho(x)}{Z_k} }{\sum_{h=1}^K P_h(x) \frac{P_h(x) \rho(x)}{Z_h}  }$$

Extending the parametric procedure, one would then evolve $P_k(x,t)$ through the integro-differential equation resulting in the reaction term $R_k(x,t)$ presented in the introduction:
%
\begin{equation}
 \frac{\partial P_k(x, t)}{\partial t} =  R_k(x,t) \equiv \left(\frac{\frac{P_k(x,t)}{Z_k(t)}}{\sum_{h=1}^K \frac{P_h^2(x,t)}{Z_h(t)}}  - 1  \right) P_k(x,t).
 \label{Reaction}
\end{equation}
The steady states of (\ref{Reaction}) and their stability are characterized by lemma \ref{fplemma}, and the ground state by lemma \ref{LLlemma}. These lemmas show that (\ref{Reaction}) necessarily converges to rigid assignments --i.e. to $P_k^j \in \{0, 1\}$-- and that the ground state has classes of equal size $Z_k = \frac{1}{K}$.  

Note that the data (that is, the probability measure $\rho(x)$) appears in (\ref{Reaction}) only through the integrals defining the $Z_k$, which represent the expected values of $P_k$ under $\rho(x)$ or, equivalently, the total mass in each class $k$. This is convenient since, when implementing the non-parametric algorithm in real scenarios, we will need to return to a discrete setting where only samples $\{x^j\}$ of $\rho(x)$ are available. In that case, expected values can be naturally replaced by the corresponding empirical means.
Another consequence though is that the system does not use any metric information, as the algorithm is indifferent to whether two points $x^i, x^j$ are near or far from each other. This is in contrast to the parametric scenarios, where the estimated $\rho_k(x,t)$ linked neighboring points. In order to restore metric information while retaining the fully non-parametric nature of the algorithm, we add diffusion to the evolution equation:
\begin{equation}
  \frac{\partial P_k(x, t)}{\partial t} = R_k(x, t) + \nu D_k(x, t),  \label{ipde}
\end{equation}
where $D$ is a diffusion operator and $\nu$ is a scalar viscosity or diffusivity. In particular, when $X$ is a smooth manifold and $\rho(x)$ is a probability density, we can adopt
\begin{equation}
 D_k = \frac{1}{\rho(x)} \nabla \cdot \left(\rho(x) \nabla P_k(x,t) \right) .
\end{equation}
This relaxes the probabilistic assignments $P_k$ at each point toward their local averages, thus pushing nearby points to have similar assignments. Including the density $\rho$ in the probability flux is natural, as diffusion should only act in the presence of particles to carry the assignment $P_k$. Locations with small density are natural boundaries between clusters, across which one would not want class-assignment to diffuse. The factor $\frac{1}{\rho}$ follows from the requirement that diffusion conserve the total mass $Z_k = \int P_k\ \rho \ dx$ in each class, for which $\rho D_k$ must be in divergence form.

To better understand the nature of the added diffusive terms, rewrite (\ref{ipde}) as an advection-reaction-diffusion equation,
\begin{equation}
 \frac{\partial P_k}{\partial t} -  \nu \frac{\nabla \rho}{\rho} \cdot \nabla P_k =  R_k + \nu \Delta P_k
 \label{ipde_diff}
\end{equation}
and notice that the drift velocity of $P_k$ 
$$ - \nu \frac{\nabla \rho}{\rho}  = -\nu \nabla \ln \rho$$
points in the direction of decreasing \emph{global density}, diverging from areas of high density and converging to areas of low data density, thus creating stronger gradients \emph{between clusters} in these regions. Notice also that, even though (\ref{ipde_diff}) exchanges probability among neighboring values of $x$ for each $k$ independently, it preserves the condition $\sum_k P_k(x, t) = 1$.
%

The system in (\ref{ipde}), together with a dynamic determination of the diffusivity $\nu$ to be discussed below, constitutes the core evolution equation proposed in this article. In this form, it is a novel system of partial integro-differential reaction-diffusion equations, which we will study in more detail in section \ref{sec:analysis}. Our immediate goal, however, is to adapt it to real data situations, where the probability mesure $\rho(x)$ is only known through a set of available sample points.  

\subsection{Sample-driven non-parametric time evolution}

When $\rho(x)$ is only known through a collection of sample points $\{x^j\}$, a natural unstructured grid for the spatial dependence is provided by these observations themselves. The evolution equation, evaluated at these points with $P_k^j(t) =P_k(x^j, t)$, yields the system
\begin{equation}
 \frac{dP_k^j}{dt} = R_k^j + \nu D_k^j ,  \label{Pt2}
\end{equation}
with
\begin{equation}
R_k^j(t) = \left(\frac{\frac{P_k^j}{Z_k}}{\sum_{h=1}^K \frac{{P_h^j}^2}{Z_h}}  - 1  \right) P_k^j,  \qquad  Z_k(t) = \frac{1}{m} \sum_{j=1}^m P_k^j \label{Req}
\end{equation}
and $D_k^j$ a discrete diffusion term, based on the following considerations.

The regular Laplacian is minus the variational derivative of the squared gradient (i.e. the Dirichlet energy), and similarly
$$ \nabla \cdot \left(\rho(x) \nabla P_k(x,t)\right) = 
-\frac{\delta}{\delta P_k} \left[\frac{1}{2} \int \|\nabla P_k \|^2 \rho(x) dx \right]. $$
Thus it is natural to start with a similar measure of the variability of $P_k$ in the graph provided by the observations. Given a distance $d$ in the sample space $X$, we introduce a set of weights $c_{i,j}$ inversely proportional to the square distance between points $x^i$ and $x^j$
$$ c_{i,j} = \frac{1}{d\left(x^i, x^j\right)^2 + \epsilon^2}. $$
When implementing the algorithm we truncate this distance for computational convenience, keeping only the $n$ nearest neighbors, i.e. the $n$ largest values of $c_{i,j}$ for each $i$, setting all other $c_{i,j}$ to zero.

We included an $\epsilon \ll 1$ in the weights in order to mollify them, since pairs of samples $(x^i, x^j)$ can be arbitrarily close. Though such pairs should in most situations be assigned to the same cluster, we would not want their assignments to be rigidly slaved to each other, as a nearly infinite value of $c_{i,j}$ would imply. The reason is that there may be other factors pushing their assignments apart, such as prior information in semi-supervised scenarios, or the existence of additional diffusivity networks, discussed in section \ref{sec:manyfeatures}.\footnote{In order to fix $\epsilon$ in a way that will be consistent with those extensions, we first normalize the $\{x^j\}$ so that they have unit variance,
$ x^j \rightarrow \frac{1}{s} x^j, \quad s = \sqrt{\hbox{tr}\left(x x^t\right)}$,
and then adopt
$ \epsilon = \frac{1}{m}$,
which has the order of a typical distance between neighboring points.}

Now, consider the total weighted variability
$$ C(P) = \frac{1}{2} \sum_{i,j} c_{i,j} \left(P^i-P^j\right)^2. $$
Since
$$ \frac{\partial C}{\partial P^i} = \sum_j \left(c_{i,j} + c_{j,i}\right) \left(P^i-P^j\right),$$
we introduce the graph-Laplacian matrix $L$:
\begin{equation}
 L_i^j =  \left(c_{i,j} + c_{j,i}\right) \quad \hbox{for $i \ne j$}, \quad L_i^i = -\sum_j \left(c_{i,j} + c_{j,i}\right) , 
 \label{gL}
\end{equation}
and define the discrete diffusion as
$$ D_k^i = \sum_j L_i^j P_k^j. $$
Notice that this term does not create probability but just redistributes it, as it preserves the sum of the $P_k^j$ for each class $k$. It also preserves the condition that $\sum_k P_k^j = 1$, like its continuous counterpart. The fact that the matrix $L$ so built is sparse will highly reduce the computational cost of the semi-implicit numerical scheme proposed below.

The argument above was based on translating a variational formulation from the continuous to the sample-based case. If desired, a more direct argument can be made based directly on the final form of the discrete operator. Fixing the class, and
in the simplest case of one-dimensional data with a matrix $c$ built using just the closest neighbor and without mollification, the argument is as follows:
\begin{eqnarray*}
 D^i&=& c_{i, i+1}\left(P^{i+1} - P^i\right) - c_{i-1, i}\left(P^{i} - P^{i-1}\right) \\
 &=& \frac{P^{i+1} - P^i}{\left(x^{i+1} - x^i\right)^2} - \frac{P^{i} - P^{i-1}}{\left(x^{i} - x^{i-1}\right)^2} \\
 &=& \frac{\hat{P}_x^{i+\frac{1}{2}}}{x^{i+1} - x^i} - \frac{\hat{P}_x^{i-\frac{1}{2}}}{x^{i} - x^{i-1}} \quad \left(\hbox{where } \hat{P}_x^{i+\frac{1}{2}} = \frac{P^{i+1} - P^i}{x^{i+1} - x^i} \right)\\
 &=& \frac{1}{x^{i+\frac{1}{2}} - x^{i-\frac{1}{2}}}  \left[\frac{x^{i+\frac{1}{2}} - x^{i-\frac{1}{2}}}{x^{i+1} - x^i} \hat{P}_x^{i+\frac{1}{2}} - \frac{x^{i+\frac{1}{2}} - x^{i-\frac{1}{2}}}{x^{i} - x^{i-1}} \hat{P}_x^{i-\frac{1}{2}}\right] \\
 &=& \frac{1}{x^{i+\frac{1}{2}} - x^{i-\frac{1}{2}}}  \left[\frac{\hat{\rho}^{i+\frac{1}{2}}}{\hat{\rho}^i} \hat{P}_x^{i+\frac{1}{2}} - \frac{\hat{\rho}^{i-\frac{1}{2}}}{\hat{\rho}^i} \hat{P}_x^{i-\frac{1}{2}}\right]  \quad \left(\hbox{where } \hat{\rho}^{i+\frac{1}{2}} = \frac{1}{x^{i+1} - x^i} \right) \\
 &\approx& \frac{1}{\rho^i} \frac{\partial}{\partial x} \left(\rho P_x \right)\Big|_{x^i},
\end{eqnarray*}
where we have used the fact that typical distances between neighboring points are inversely proportional to the local density.

\subsection{Choices for the diffusivity}

In order to cluster data using either the density-driven PDEs in (\ref{ipde}) or the data-driven ODEs in (\ref{Pt2}), one needs to set the value of the diffusivity $\nu$. If this is too small, the influence of the metric is too weak to affect the clustering, and the solution yields spatially unconnected assignments depending solely on the initial assignment of the $P_k^j$. If the diffusivity is too large, on the other hand, the process converges to a uniform, non-informative $P_k^j = \frac{1}{K}$.
 A good value of $\nu$ would establish a balance between the reaction term in (\ref{ipde},\ref{Pt2}) which tends to break the $P_k^j$ into ones and zeros without regards for their proximity, and the diffusive term, which renders spatially uniform the probabilistic assignments to each class.   

To strike this balance automatically and adaptively, we propose
\begin{equation}
  \nu = \alpha \frac{\|R\|}{\|D\|} ,
  \label{alpha}
\end{equation}
where $\alpha$ is a fixed parameter and we are using the Frobenius norm, i.e. the square root of the sum over classes of either the squared Euclidean or $\rho$-weighted $L^2$ norms in the discrete and continuous settings respectively.

To decide on the value of $\alpha$, notice that, at convergence, one must have
$$ R + \nu D = 0, \quad \hbox{so} \quad \|R\| = \nu \|D\|. $$
Hence, if $\alpha < 1$, we necessarily have at convergence $R=0$, which implies that all assignments are rigid, with the $P_k^j \in \{0, 1\}$ (The solution $P_k^j = \frac{1}{K}$, which also makes $R=0$, is unstable.) On the other hand, if $\alpha > 1$, the solution necessarily converges to $D=0$, yielding a uniform assignment for each connected component of the data (here the connectivity is established through the matrix $c$ defining $L$.)
A natural choice is to select a value of $\alpha$ equal to or slightly smaller than $1$, so that diffusivity can bring about the spatial information encoded in the matrix $L$ before the reactive components in $R$ dominate and freeze the solution with hard assignments.

In order to solve (\ref{Pt2}) numerically, we replace the time derivative on the left-hand side with the finite difference approximation
$$ \frac{dP_k^j}{dt} \rightarrow \frac{\left(P_k^j\right)^{n+1} - \left(P_k^j\right)^n}{\Delta t} . $$
For the right hand-side of (\ref{Pt2}), since $R_k^j$ is nonlinear, it is natural to treat it explicitly i.e. to evaluate it at the current time $n$. Doing the same with $D_k^j$, on the other hand, would necessitate the use of very small time intervals $\Delta t$ to avoid the numerical instability associated to the CFL condition for diffusion. Moreover, special care would be required to guarantee that $P$ remains non-negative throughout. Thus we treat the diffusion term implicitly, a choice for which Lemma \ref{TSlemma} shows that any $\Delta t \le 1$ yields a stable, positivity preserving scheme.

Typically we use $\Delta t = 0.99$ and, in clustering experiments, $\alpha = 0.95$. The value of $\alpha$ strictly below $1$ accounts both for the aforementioned desirability of hard assignments and from the fact that the time-discrete problem with a large timestep does not behave identically to the continuous problem. In particular, in order to update the diffusivity $\nu$, we estimate $\|D\|$ explicitly, at time $t^n$, which is slightly inaccurate, since we are applying $D$ implicitly, at time $t^{n+1}$. On the other hand,  we will see below that, in semi-supervised settings, it makes sense to use values of $\alpha$ bigger than one.

The bottom-right panel of figure \ref{fig:Spirals} shows the new non-parametric procedure succeeding on the same example where the parametric and the kernel-density-estimation based approach had mostly failed. Unlike those approaches, the non-parametric procedure does not depend on any probability density estimation.
 
\section{Extensions and examples: semisupervised classification and multiple data types}

The clustering procedure developed above admits a number of quite straightforward extensions. This section describes two such extensions: to semisupervised classification, where some labels are known before-hand, and to the use of complementary features in clustering. As a particularly impactful application of the latter, we develop a new methodology for the clustering of time series, which can be used for determining when a process switches between different regimes.

\subsection{Semisupervised classification}

In semisupervised classification problems, one is given a set of samples for a subset $I$ of which the labels $P_k^i$ are known, and seeks a label assignment for the remaining samples. The procedure extends to this scenario with minimal changes: the known $P_k^I$ are given from the start and not updated by the algorithm, while the others are updated as before. The known samples are included in the Laplacian operators, and therefore the labels of the other samples can diffuse toward the labels of their pre-labeled neighbors. Thus each iteration of the algorithm now reads:

$$ \nu^n = \alpha \frac{\|R^n\|}{\|L P^n\|}, $$
$$    \left(P_k^j\right)^{n+1} = 
\begin{cases}{\left(P_k^j\right)^{n} + \Delta t \left(R_k^j\right)^n + \nu^n \Delta t \sum_i L_i^j \left(P_k^i\right)^{n+1} 
  & \hbox{for $j \not\in I$}, \cr
   \left(P_k^j\right)^{n} & \hbox{for $j \in I$} }\end{cases} $$

Unlike pure clustering, in semisupervised classification one can use values of $\alpha$ larger than one for the determination of the diffusivity, since the existence of available labels forbids a uniform $P$. As a consequence of this higher diffusivity, not all $P_k^i$ necessary converge to either $0$ or $1$. Figure \ref{fig:SemiSupervised} shows that intermediate values of $P$ do indeed show up in areas where the class assignment is not unambiguously determined by the available data.

\begin{figure}[!htpb]
\begin{center}
\begin{tabular}{lll}
\includegraphics[width=.49\textwidth]{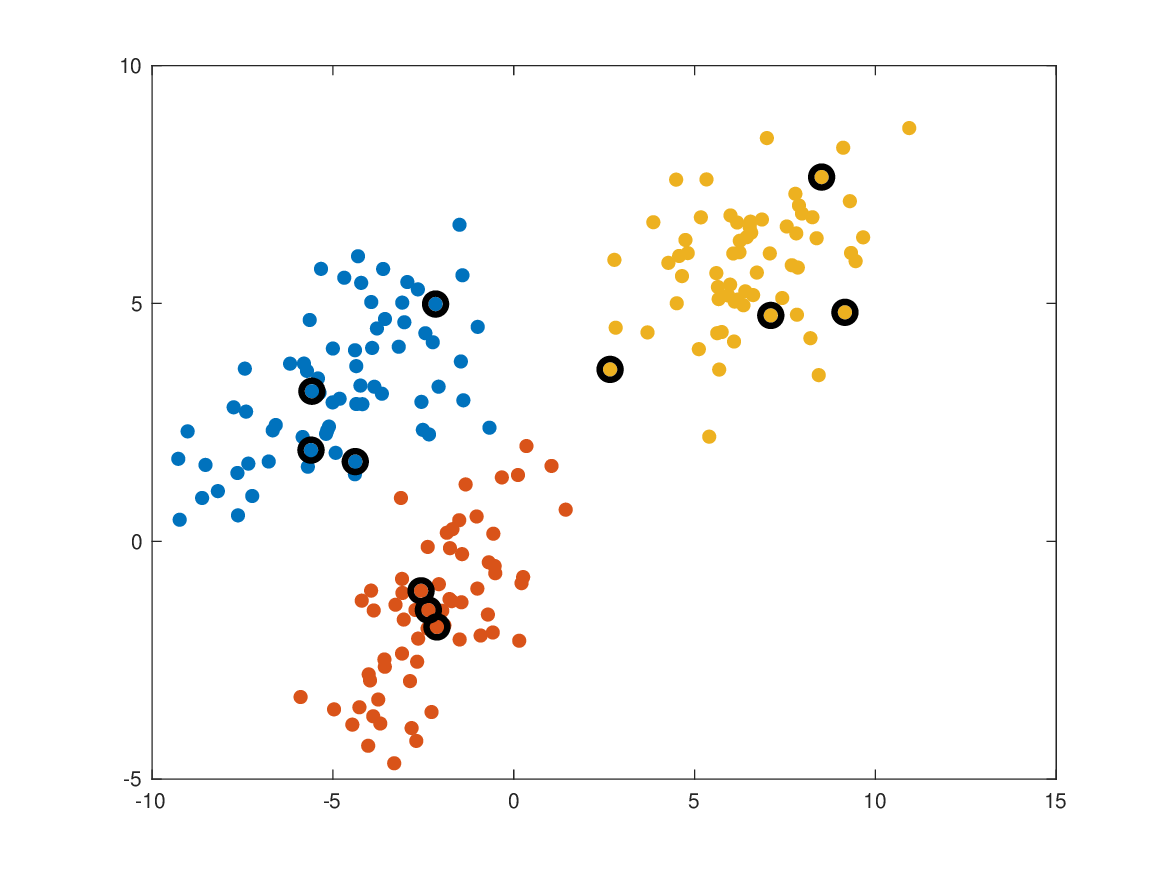} &
\includegraphics[width=.49\textwidth]{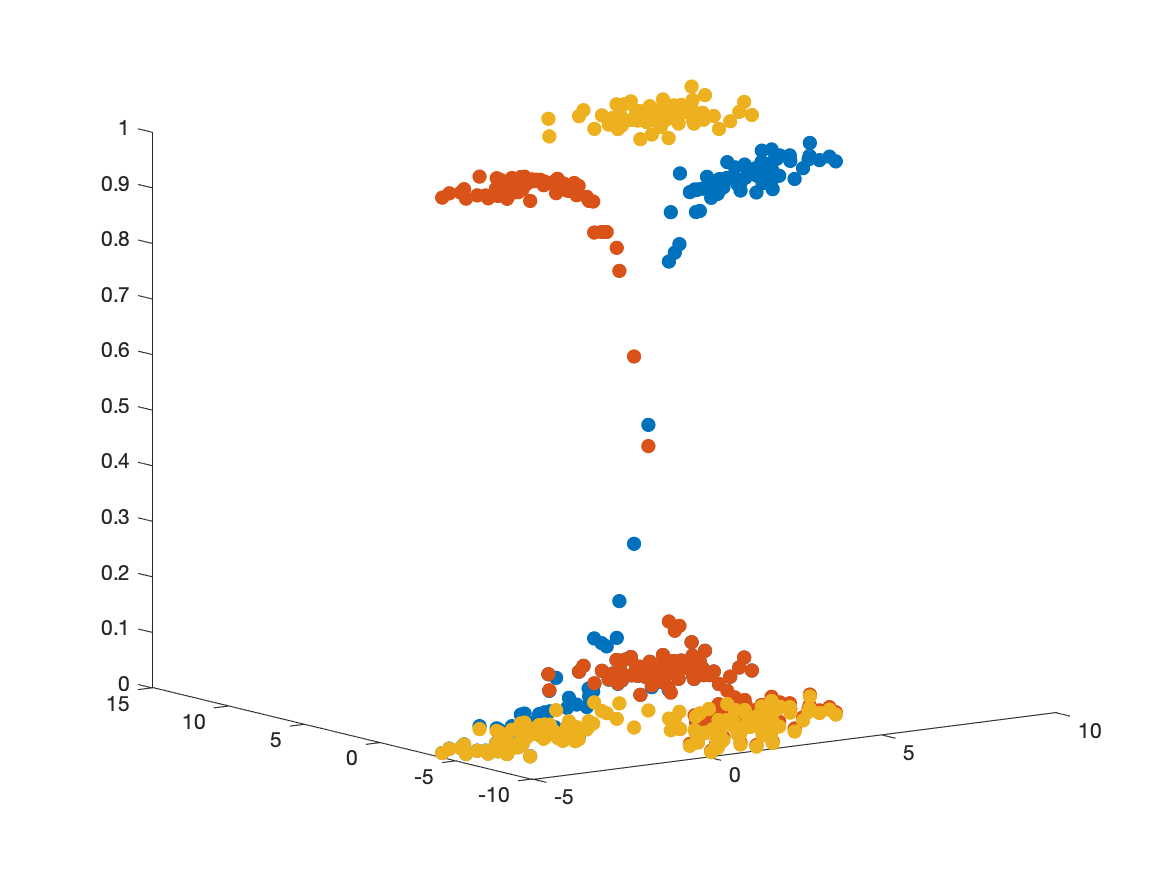} \\
\end{tabular}
\caption{Semi-supervised clustering. Left: data-points, with their assignments displayed by color and black circles marking those points where the labels were known. Right: probabilities $P_k^j$. Notice that while the points in the yellow cluster are assigned unambiguously, those points near the boundary between the red and blue clusters receive soft assignments, as permitted by a value $\alpha = 1.75 > 1$ used for the adaptive diffusivity.
}
\label{fig:SemiSupervised}
\end{center}
\end{figure}

\subsection{Multiple metrics (independent diffusions)}
\label{sec:manyfeatures}

One can think of the $c_{i,j}$ as weights attached to a connectivity graph based on the geometry underlying the samples $\{x^i\}$. A natural extension applies to situations with more that one such ``geometry'' exists. These various metrics may be associated to different features or covariates, to real networks associated to the data points or, as discussed in the next subsection, to space and time. 

The procedural extension is quite straightforward: one replaces (\ref{Pt2}) with  
\begin{equation}
 \frac{dP_k^j}{dt} = \left(\frac{\frac{P_k^j}{Z_k(t)}}{\sum_{h=1}^K \frac{{P_h^j}^2}{Z_h(t)}}  - 1  \right) P_k^j + \sum_l \nu_l \sum_i L(l)_i^j P_k^i , \label{MultiDiffusion}
\end{equation}
where each value of $l$ corresponds to a distinct connectivity network and each operator $L(l)$ takes into consideration only the distances in the $l$'th metric. Thus class assignments can diffuse between distant points, provided that they are close in some of the metrics.

In order to extend the adaptive determination of the diffusivity, one sets beforehand a set of weights $\lambda_l > 0, \ \sum_l \lambda_l = 1,$ attached to each network, so that $\nu_l = \nu \lambda_l,$ with
\begin{equation}
  \nu = \alpha \frac{\|R\|}{\sum_l \lambda_l \|D_l\|} ,
  \label{alpha_multi}
\end{equation}
and $ D_l = L_l P. $


\subsubsection{Clustering of time series}

A natural application of the extension just discussed is to the clustering of time series, where the goal is to classify behavior into different regimes and find the corresponding switching times. Here one associates to each observation $x^j$ the time $t_j$ as a covariate, and defines two sets of connectivity weights: one (or more) associated with the space of the $\{x^j\}$, and another with proximity in time. As a consequence, probability diffuses in time as well as space, inhibiting frequent jumps among clusters in the time variable.

For a simple example, consider two clusters, with overlapping distributions
$$ \rho_{\pm}(x) = N\left(\pm 1, 1\right), $$
and a process whereby the cluster is determined by the sign of a latent variable $z$ satisfying the SDE
\begin{equation}
  dz = -V_z \ dt + dW, \quad V = z^4 - 2 z^2, 
  \label{dz}
\end{equation}
with two meta-stable states symmetric at $z = \pm 1$. We draw 500 samples of $x$  at regular intervals $\Delta t = 0.02$. The data for the clustering algorithm consists of pairs $(t^j, x^j)$, each to be assigned to one of the two clusters.
Figure \ref{fig:Time_series} displays the generation of data for this example and the reconstruction by the algorithm of the time evolution of the underlying two-regimes. 

\begin{figure}[htpb]
\begin{center}
\begin{tabular}{ll}
\includegraphics[width=.49\textwidth]{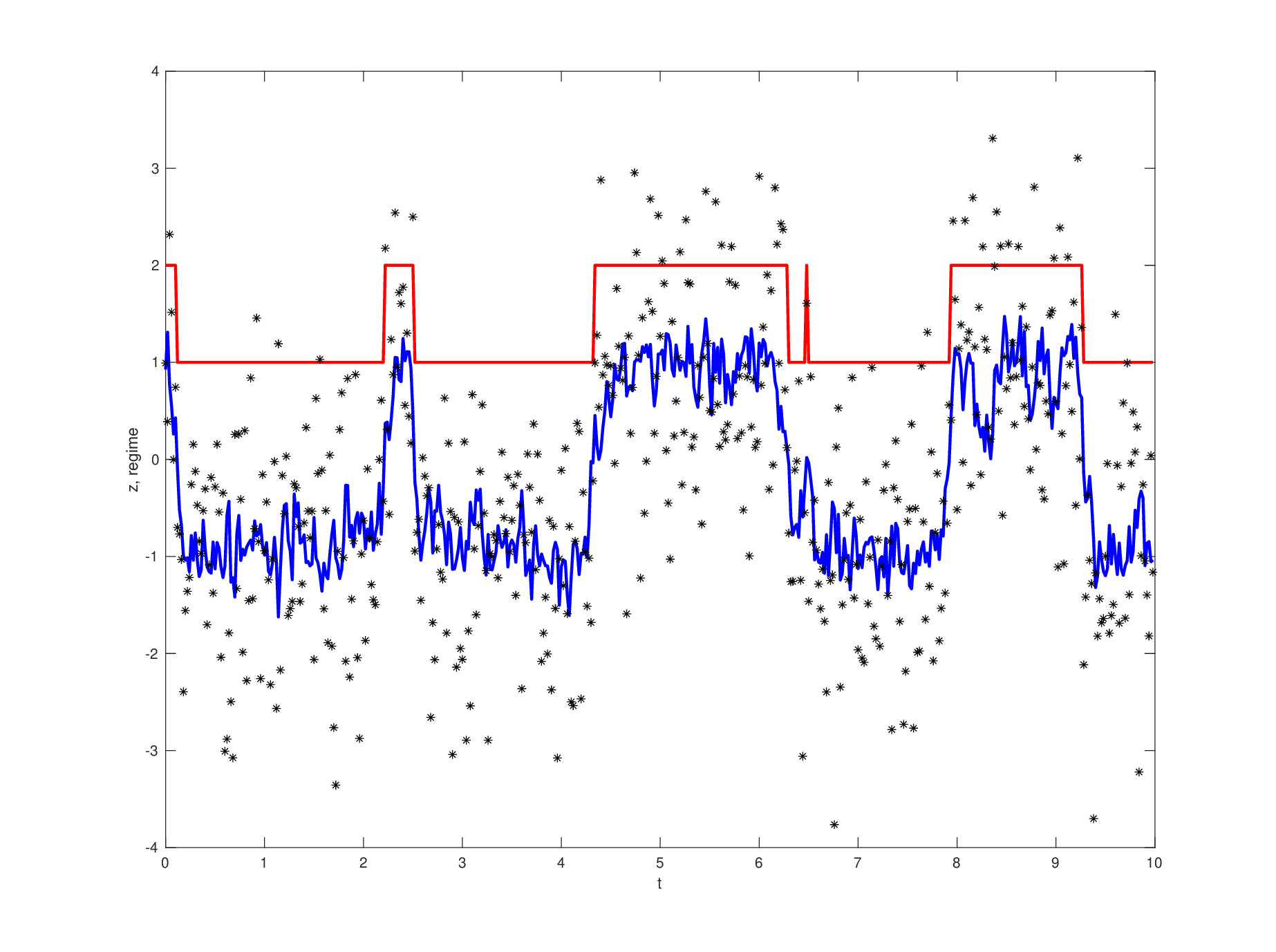} &
\includegraphics[width=.49\textwidth]{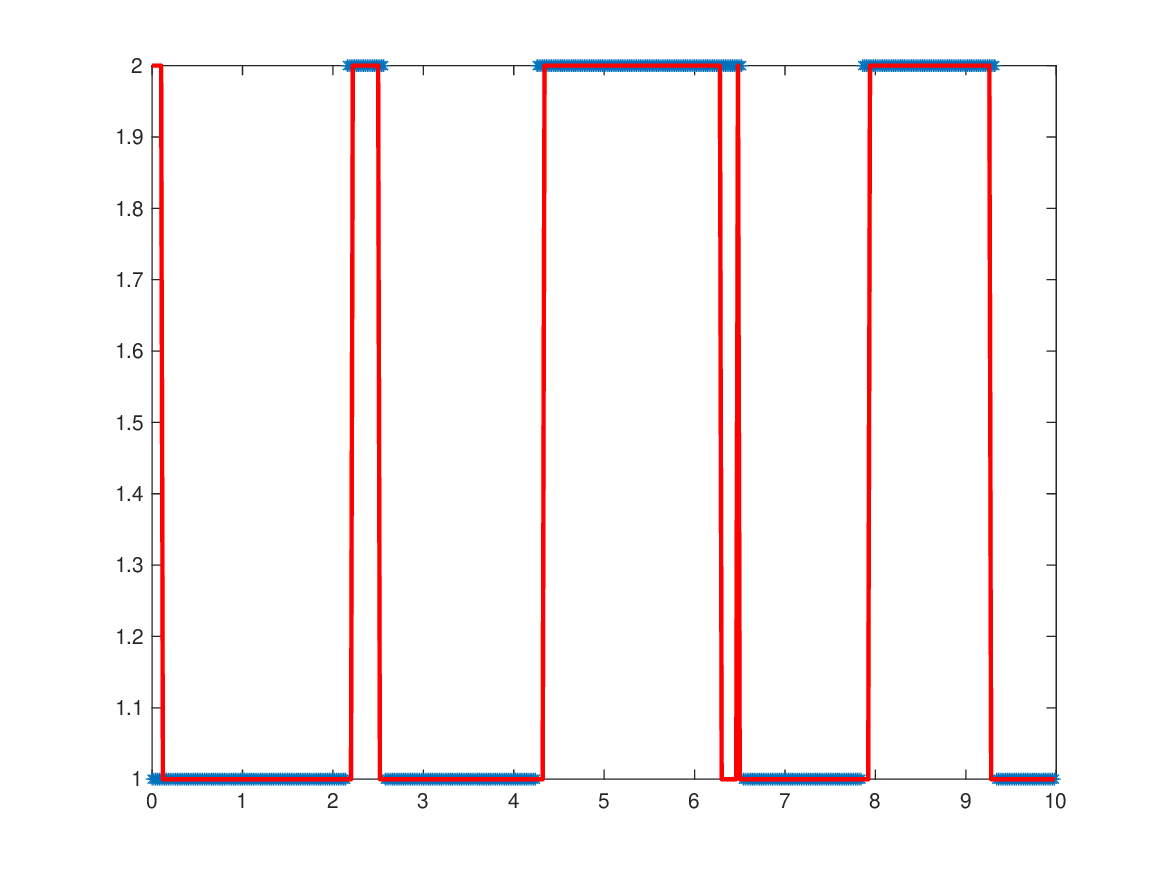} \\
\end{tabular}
\caption{Clustering of a time series. The plot on the left displays the data-generation process: a sample trajectory $z(t^j)$ of a discretized version of the stochastic process in (\ref{dz}) in blue, the corresponding assignment to the classes  $c^j \in \{+, -\}$ (plotted in red as $1$s and $2$s) and the samples $x^j$ from the corresponding $\rho_{\pm}$ as black stars. The plot on the right displays in blue the classes reconstructed by the algorithm, in excellent agreement with the actual classes (in red).
}
\label{fig:Time_series}
\end{center}
\end{figure}

\subsubsection{A synthetic medical example}

Diffusion through multiple metrics can also be applied to the diagnosis of medical conditions with a variety of symptoms which are not always present. We provide here an example where conditions are reconstructed from symptoms. Let the variables $x_l$ for $l={1, \ldots, L}$ represent measurable quantities where the symptoms may manifest. For instance, we may have $x_1 = \hbox{body temperature}$, $x_2 = \hbox{blood pressure}$, $x_3 = \hbox{white cell count}$, etc. Let $\rho_b^l(x_l)$ represent the background, asymptomatic distributions for these variables. The classes $k = 1,\ldots,  K$ represent the various possible conditions to diagnose, such as the common cold, the flu, pneumonia or no ailment at all. Denote by $p_k$ the prior probability of condition $k$ in the population under study.

For each condition $k$ and variable $x_l$, there is a probability $s_k^l \in (0, 1)$ that the symptoms manifest, and a distribution $\rho_k^l(x_l)$ when they do. It follows that the full distribution $R^l(x_l)$ for each variable $x_l$ is given by
$$ R^l(x_l) = \sum_k p_k\left(s_k^l \rho_k^l(x_l) + \left(1-s_k^l\right) \rho_b^l(x_l)\right).$$

In a synthetic example, we generate data $\{x_l^j\}$ for $n$ patients as follows. For each patient $j$, we draw their condition $k$ from the distribution $p_k$. Then, for each variable $x_l$, we draw the presence of symptoms from $s_k^l$ and then draw $x_l^j$ from either $\rho_b^l(x_l)$ or $\rho_k^l(x_l)$ accordingly. In the example shown below, we have adopted $L=8$ real variables $x_l$,  $K=3$ conditions with probability $p_k = 1/3$ each, $n=500$ patients, background conditions
$$\forall l \ \rho_b^l = N(0, 1), $$
probabilities of symptom manifestation
$$ \forall k\ \forall l \ s_k^l = \frac{3}{4}, $$
and symptomatic variable distributions
$$ \rho_k^l = N\left(\mu_k^l, 1 \right), \quad \mu_k^l = \frac{1}{2} +\left(3 k - 2 l\right). $$

We applied the algorithm with one diffusive network per variable, with equal weight $\lambda_l = \frac{1}{8}$ assigned to each. Figure \ref{fig:Clinical} displays the results of the converged assignment in four plots, each corresponding to the plane of two variables, with the samples colored by condition. The five misdiagnosed cases, circled in black, lie either in the asymptomatic area around $x_l = 0$ or at the interface between the distribution corresponding to two conditions. By contrast, running  on the same data the code with one single eight-dimensional diffusive network yielded a much greater number of misclassifications, 76 in this case.

\begin{figure}[!htpb]
\begin{center}
\includegraphics[width=\textwidth]{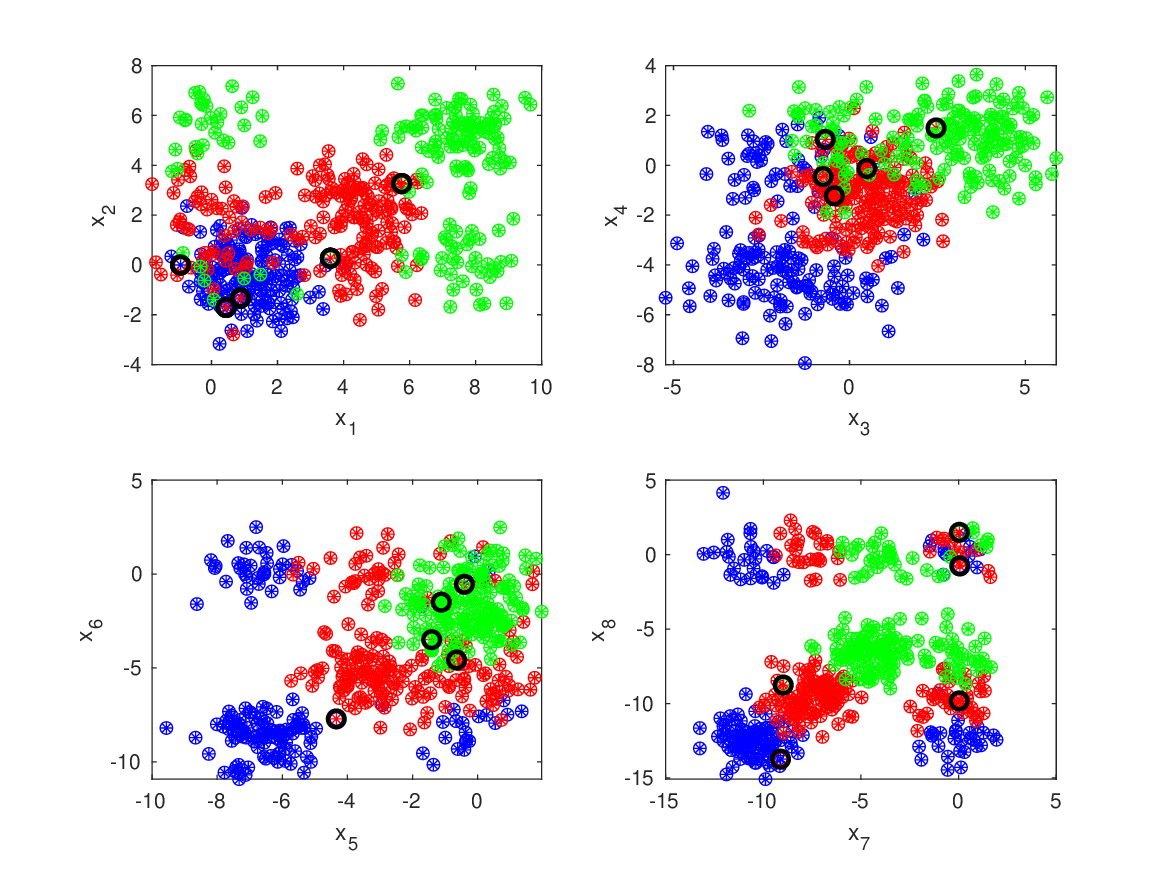}
\caption{Results from a synthetic medical example with eight clinical variables associated to distinct diffusive networks, clustered into three possible diagnoses, displayed by color. The five misclassifications, marked by black circles, display few symptoms, which are moreover either ambiguous or more typically attributable to different conditions. 
}
\label{fig:Clinical}
\end{center}
\end{figure}

Besides its immediate medical applicability, this example points to general features of the potential of clustering in a multi-metric setting:

\begin{enumerate}
\item {\bf The blessing of dimensionality.} One can repeat the synthetic medical example changing the number $L$ of variables observed --and corresponding networks. Figure \ref{fig:Clinical2} displays the number of misclassifications as a function of $L$, a number that decays rapidly, reaching in this typical realization of the data, one for $L>8$ and
 zero for $L>15$. On the one hand, this is consistent with the intuitive fact that each additional observable $x_l^j$ provides additional information on the class $k^j$. However, it runs contrary to the ``curse of dimensionality'', which haunts clustering algorithms such as $k$-means in high dimensions. Assigning a network per variable avoids this curse altogether, as outlying values of a particular variable $x_l$ do not stop the probability $p_k$ from diffusing across the other available networks.

\item {\bf Robustness to outliers.} More generally, the availability of more than one diffusive network makes the clustering algorithm robust to outliers, since high barriers along some diffusive network do not inhibit diffusion along the others. 
\end{enumerate}

\begin{figure}[!htpb]
\begin{center}
\includegraphics[width=.75\textwidth]{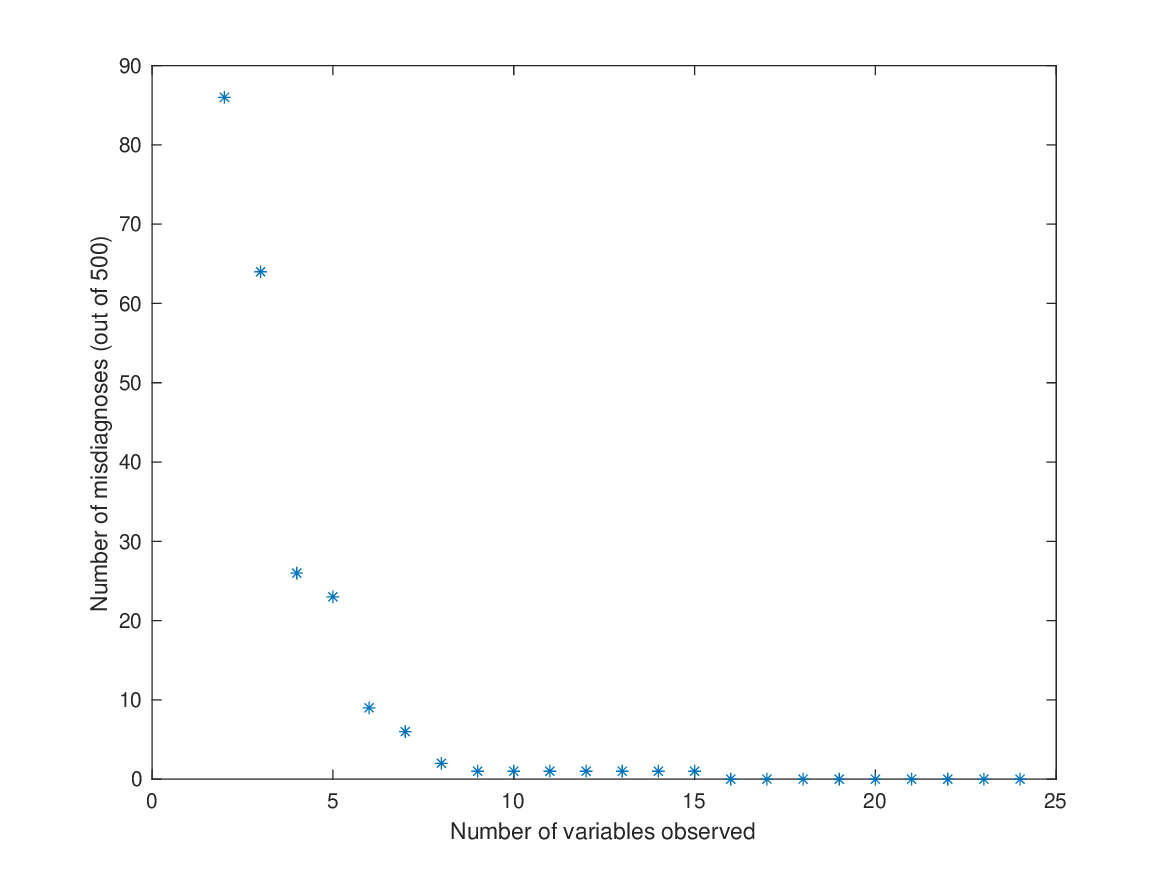}
\caption{Number of miss-classifications, for a fixed data set, as a function of the number of variables observed. When an individual network is assigned to each variable, high-dimensionality is a blessing, not a curse.
}
\label{fig:Clinical2}
\end{center}
\end{figure}

\section{Properties of the reaction--diffusion system}

Some properties of the reaction--diffusion system of Section \ref{ContEq} are proved in this section that shed light on its applicability and on the numerical scheme that we use to solve it. First we study what steady solutions the system has and which are dynamically reachable, also interpreting the evolution in terms of maximising a likelihood. Then we prove that the proposed numerical method is stable for time-steps $\Delta t \le 1$. Finally, we derive the continuous limit of multi-diffusive networks, which gives rise to a family of non-local diffusion operators.

\subsection{Convergence}

We first show that the only stable steady solutions to (\ref{Reaction}) are rigid assignments with no empty class.

\begin{lemma} \label{fplemma}

All steady states of (\ref{Reaction}) can be built by selecting $K$ numbers $Z_k \ge 0$ with
$$ \sum_k Z_k = 1. $$
and a subset $W$ of the space $X$ (the set of rigid assignments) with measure $w$ (extreme choices have $W$  empty or $W = X$). Then

\begin{enumerate}

\item For each $x \in W$, select a class $k$ and assign $P_k(x) =1$ and $P_{h \ne k}(x) = 0$, so that
$$ \forall k \ \int_{W} P_k(x) \ d\rho(x) = w\ Z_k. $$

\item For all $x \not\in W$, assign
$$ P_k(x) = Z_k. $$

\end{enumerate}
Of these steady states, only those with $W = X$ --i.e. only rigid assignments-- and no class with $Z_k = 0$ are dynamically stable.

\end{lemma}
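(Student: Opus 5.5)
The plan is to read off the steady states pointwise from the factorized form of (\ref{Reaction}), then check stability by linearizing, using a Lyapunov-type functional for the local dynamics.

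\textbf{Step 1: pointwise characterization.} Fix $x$ and write $S(x)=\sum_h P_h^2(x)/Z_h$. Then $R_k(x)=0$ means that, for each $k$, either $P_k(x)=0$ or $P_k(x)/Z_k=S(x)$. Let $A(x)=\{k: P_k(x)>0\}$. On $A(x)$ we therefore have $P_k(x)=c(x)Z_k$ with a common constant $c(x)$. The constraint $\sum_k P_k=1$ forces $c(x)=1/\sum_{k\in A(x)}Z_k$. A direct check gives $S(x)=\sum_{A}c^2Z_k=c$, so every such choice is consistent.

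The two cases in the statement are the extremes of this family:
\begin{itemize}
\item $|A(x)|=1$ gives a rigid point, which belongs to $W$;
\item $A(x)=\{1,\dots,K\}$ gives $P_k(x)=Z_k$.
\end{itemize}
Intermediate supports are also steady states. I would either include them as a remark or absorb them into the stability argument, since the instability proof below covers them too.

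\textbf{Step 2: self-consistency.} The numbers $Z_k$ must agree with $\int P_k\,d\rho$. Splitting the integral over $W$ and $X\setminus W$ gives $Z_k=\int_W P_k\,d\rho+(1-w)Z_k$. This is exactly the condition $\int_W P_k\,d\rho=wZ_k$ in item 1. Summing over $k$ shows that $\sum_k Z_k=1$ is automatic.

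\textbf{Step 3: stability, local argument.} I would first freeze $Z$. This is justified for perturbations supported on a set of small $\rho$-measure, where the change in $Z$ is of higher order. With $Z$ frozen, $V(x)=\sum_k P_k^2/Z_k$ is nondecreasing along the flow. A short computation gives
$$\dot V=2\Big(\frac{\sum_k P_k^3/Z_k^2}{S}-S\Big)\ge 0,$$
by Cauchy--Schwarz, since $S^2=\big(\sum_k P_k\cdot P_k/Z_k\big)^2\le \sum_k P_k\sum_k P_k(P_k/Z_k)^2$. Equality holds exactly at the steady states of Step 1.

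The soft point $P=Z$ is the global minimum $V=1$ on the simplex, so it cannot be stable. Concretely, linearizing with $P=Z+\delta$ and $\sum_k\delta_k=0$ gives $S=1+O(\delta^2)$ and $\dot\delta_k=\delta_k$, an eigenvalue $+1$. The same computation applies on intermediate supports.

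\textbf{Step 4: stability of rigid states and empty classes.} At a rigid point in class $k$, set $P_h=\varepsilon_h$ for $h\neq k$. Then $S\approx 1/Z_k$ and $\dot\varepsilon_h\approx(\varepsilon_h Z_k/Z_h-1)\varepsilon_h\approx-\varepsilon_h$. This is exponential decay with rate $1$, provided $Z_h>0$.

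If some $Z_h=0$, the reaction term is singular. A perturbation giving class $h$ a small mass $\eta$ on a set of measure $\mu$ produces $Z_h\approx\eta\mu$. The ratio $P_h/Z_h\approx 1/\mu$ is then large, so $P_h$ grows and the empty class invades. Hence rigid states with an empty class are unstable.

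\textbf{Main obstacle.} The delicate point is making Steps 3 and 4 rigorous for perturbations of positive measure, where the nonlocal coupling through $Z_k$ cannot be neglected. Here I would linearize the full integro-differential operator and treat the $Z$-variations as a finite-rank perturbation of the multiplication operator with eigenvalues $\pm1$. The goal is to show this finite-rank term cannot remove the positive spectrum at soft points, nor create positive spectrum at rigid points with all $Z_k>0$. For the latter, note that $\delta Z$ enters $R_h$ only multiplied by $P_h$, which is $O(\varepsilon)$ away from class $h$.
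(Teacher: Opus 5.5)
Your proposal is correct and follows essentially the paper's route. Both arguments use the pointwise factorization of $R_k$, consistency of the $Z_k$ with $\int P_k\,d\rho$, and local linearizations at soft points, at rigid points, and at rigid states with an empty class.

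Your rigid-point and empty-class computations coincide with the paper's. Your rate $+1$ at $P=Z$ is the $\mu\to0$ limit of the paper's calculation, which keeps the self-induced change $\mu\epsilon_k$ in $Z_k$. Done correctly, that calculation gives $(1-\mu)\epsilon_k$. The paper's extra factor $Z_k$ comes from dropping a $1/Z_k$ when expanding $(Z_k+\epsilon_k)/(Z_k+\mu\epsilon_k)$, and it does not change the sign.

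Where you differ, you are more careful than the paper. The paper passes from ``$P_k(x)=0$ or $P_k(x)/Z_k=\sum_h P_h^2(x)/Z_h$'' to ``$P_k(x)=Z_k$'' as if the second alternative held for every $k$. So for $K\ge3$ it misses your partially supported states $P_k=Z_k/\sum_{h\in A}Z_h$ on $A$ with $1<|A|<K$; for example, $P(x)=(\frac12,\frac12,0)$ when all $Z_k=\frac13$. The lemma's claim about \emph{all} steady states is therefore literally incomplete. However, your within-support eigenvalue $+1$ shows these extra states are unstable, so the stability conclusion survives.

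Your frozen-$Z$ monotone quantity $\sum_k P_k^2/Z_k$ is the argument of the logarithm in the likelihood (\ref{LD}). The paper does not use it for this lemma, so it is a harmless extra.

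Your plan for the nonlocal linearization, which the paper does not attempt, also works. At a steady state of the lemma's form, the full linearization is $\delta P_k\mapsto\delta P_k-\int\delta P_k\,d\rho$ on $X\setminus W$ and $\delta P_k\mapsto-\delta P_k$ on $W$. This keeps the eigenvalue $+1$ on mean-zero perturbations supported in $X\setminus W$.
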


\begin{proof}

Steady states of (\ref{Reaction}) must satisfy
$$
 \forall x \ \forall k \ \left(\frac{\frac{P_k(x)}{Z_k}}{\sum_{h=1}^K \frac{P_h^2(x)}{Z_h}}  - 1  \right) P_k(x) = 0. 
$$
Then, for each value of $x$, either $P_k(x) = 0$ or 
\begin{equation}
\forall k\ \frac{P_k(x)}{Z_k} = \sum_{h=1}^K \frac{P_h^2(x)}{Z_h} .
 \label{second-case}
\end{equation}
Since the right-hand side of (\ref{second-case}) is independent of $k$ and both the $P_k$ and the $Z_k$ add up to one, we must have, in this case,
$$ P_k(x) = Z_k , $$
as stated in 2. 

On the other hand, when all but one of the $P_h(x)$ vanish, the remaining one with $P_k = 1$, satisfies (\ref{second-case}) automatically, since the sum on its right-hand side has only one non-zero term. That the relative measure of the rigid assignments in $W$ must agree with their corresponding $Z_k$ follows from the definition of $Z_k$ in (\ref{Zdef}), completing the statement in 1.

The instability of the steady non-rigid assignments, where $0<P_k(x) <1$ and $P_k(x) = Z_k$, can be seen as follows. Fix one such point $x$, and consider a perturbation 
$$ P_k = Z_k + \epsilon_k(t), \quad \epsilon_k \ll 1, \quad \sum_k \epsilon_k = 0. $$
Then
$$  \frac{d \epsilon_k}{d t} = \left(\frac{\frac{Z_k + \epsilon_k}{Z_k + \mu \epsilon_k}}{\sum_{h=1}^K \frac{\left(Z_h + \epsilon_h\right)^2}{Z_h + \mu \epsilon_h}}  - 1  \right) \left(Z_k + \epsilon_k\right), $$
where $\mu$ is the measure of the point $x$ (zero if no measure is assigned to small sets and $\frac{1}{N}$ for a discrete, equidistributed measure on $N$ points). To leading order in the $\{\epsilon_h\}$, we have
$$  \frac{d \epsilon_k}{d t} \approx \left(\frac{1 + (1-\mu) \epsilon_k}{1 + O\left(\|\epsilon\|^2\right)}  - 1  \right) \left(Z_k + \epsilon_k\right) \approx Z_k (1-\mu) \epsilon_k $$
Since $\mu < 1$ and $Z_k > 0$, the soft probability assignment is unstable.

To show that hard assignments to classes with $Z_k \ne 0$ are stable, choose an $x$ with $P_k(x) = 1$ and, for any $h \ne k$, consider the perturbation
$$ P_h = \epsilon_h(t). $$
Then
$$  \frac{d \epsilon_h}{d t} = \left(\frac{\frac{\epsilon_h}{Z_h + \mu \epsilon_h}}{\sum_{l=1}^K \frac{\left(P_l + \epsilon_l\right)^2}{Z_l + \mu \epsilon_l}}  - 1  \right) \epsilon_h
\approx \left(\frac{Z_k}{Z_h} \epsilon_h  - 1  \right) \epsilon_h \approx -\epsilon_h, $$
which shows stability.

Finally, in order to show that a class $k$ with $Z_k = 0$ is unstable, consider a point $x$ with measure $\mu$  rigidly assigned to another class $h$, and perturb the corresponding $P_k$ away from $0$, i.e. consider $P_k = \epsilon_k(t)$. We have
$$  \frac{d \epsilon_k}{d t} \approx \left(\frac{\frac{\epsilon_k}{\mu \epsilon_k}}{\frac{1}{Z_h}}  - 1  \right) \epsilon_k
= \left(\frac{Z_h}{\mu} - 1\right) \epsilon_k. $$
If point $x$ is strictly contained in class $h$, then $Z_h > \mu$ and $P_k = 0$ is unstable.

\end{proof}

Notice that, in the discrete case, not all real values of $Z_k$ are possible steady states:
assume there are $0 \le n \le m$ points $x_j$ with probability $P_k^j \in \{0,1\}$, that is, $n$ points have hard assignments to a particular cluster. Denoting the number assigned to cluster $k$ (i.e $P_k^j=1$) by $n_k \ge 0$ (with $\sum_k n_k = n$), then, when $n_k \neq 0$ 
$$Z_k = n_k/m.$$
For any remaining labels with $n_k=0$, one may choose $Z_k$ such that $\sum_k Z_k = 1$.  The remaining $m-n$ points (the soft assignments) will then have
$$P_k^j = Z_k.$$


In order to further study the flow of the system (\ref{Reaction}) and interpret the solutions towards which it converges, we introduce a Lyapunov function, the log-likelihood of the data points, that strictly increases as the system evolves. 

To develop an intuition for this Lyapunov function, consider first the parametric version of the problem in section \ref{ParamEq}. It solves a maximum likelihood problem:
$$ \max_{P, \alpha} L = \sum_j \log\left(\sum_{k=1}^K P_k^j \rho_k\left(x_j; \alpha_k\right) \right).$$
There are $K$ probability distributions $\rho_k$ depending on parameters $\alpha_k$, and each sample point $x_j$ is softly assigned to each of the $K$ classes with probability $P_k^j$. The $P$ and $\alpha$ are determined so as to maximize the likelihood function $L$. By contrast, a density estimation by the mixture of $K$ distributions has a similar likelihood $L$ and parameters $\alpha$, but the individualized assignments $P_k^j$ are replaced by global probabilities $P_k$ (our $Z_k$), the weight of each distribution in the mixture.

Switching now to the non-parametric setting, consider first the idealized situation where the distribution $\rho(x)$ is known and one seeks the assignment $P_k(x)$. Then the $\rho_k(x)$ are given by (\ref{DE}), and the likelihood function
becomes
$$ L = \int \log\left(\sum_k \frac{P_k(x)^2 \rho(x)}{\int P_k(y) \rho(y) \ dy} \right) \rho(x) \ dx, $$
while the diffusivity adds a non-smoothness cost
$$ C =  \frac{\nu}{2} \sum_k \int \|\nabla P_k(x)\|^2 \rho(x) \ dx , $$
that we will consider separately.

When the known distribution $\rho(x)$ is replaced by samples $\{x_j\}$, the likelihood function becomes
\begin{equation}
 L = \sum_j \log\left(\sum_k \frac{{P_k^j}^2}{Z_k} \right), \quad Z_k = \frac{1}{N} \sum_j P_k^j,
 \label{LD}
\end{equation}
and the cost for non-smoothness becomes
$$ C = \frac{\nu}{2} \sum_{i,j,k} c_i^j \left(P_k^i-P_k^j\right)^2 . $$

\begin{lemma} \label{LLlemma}

The reaction component of our clustering algorithm,
\begin{equation}
 \frac{d P_k^j(t)}{d t} = \left(\frac{\frac{P_k^j(t)}{Z_k(t)}}{\sum_{h=1}^K \frac{{P_h^j}^2(t)}{Z_h(t)}}  - 1  \right) P_k^j(t), \quad Z_h(t) = \frac{1}{N} \sum_j P_h^j(t),
 \label{Reaction2}
\end{equation}
converges to a local maximizer of the likelihood $L$ in (\ref{LD}). At the global maximum or ground state, the corresponding classes have equal size, i.e. all $Z_k$'s are equal.

\end{lemma}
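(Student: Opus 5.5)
The plan is to show that $L$ in (\ref{LD}) is a Lyapunov function for (\ref{Reaction2}): $dL/dt\ge 0$, with equality only at the steady states characterized in Lemma \ref{fplemma}. We then invoke a LaSalle-type argument and finally evaluate $L$ on rigid assignments. For this, introduce the shorthand
$$S^j=\sum_h \frac{(P_h^j)^2}{Z_h},\qquad g_k^j=\frac{P_k^j}{Z_k S^j},$$
so that (\ref{Reaction2}) reads $\dot P_k^j = P_k^j(g_k^j-1)$. We will use two identities. The first is $\sum_k P_k^j g_k^j=1$, which gives $\sum_k \dot P_k^j=0$, so the flow is a replicator dynamics on each simplex. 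The second is $\sum_j \dot P_k^j = N\dot Z_k$.

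Differentiating (\ref{LD}), using that $Z_k$ depends on all $P_k^i$, gives
$$\frac{\partial L}{\partial P_k^j}=2g_k^j - c_k,\qquad c_k=\frac{1}{N}\sum_i \frac{(P_k^i)^2}{Z_k^2 S^i}=\frac{1}{N Z_k}\sum_i P_k^i g_k^i .$$
Here $c_k$ is independent of $j$. Moreover $\sum_k Z_k c_k = 1$, and $c_k-1=\frac{1}{NZ_k}\sum_j P_k^j(g_k^j-1)$. Substituting into $\dot L=\sum_{j,k}(2g_k^j-c_k)\dot P_k^j$ and using $\sum_k P_k^j(g_k^j-1)=0$ to subtract constants, I expect
$$\dot L = 2\sum_{j,k}P_k^j(g_k^j-1)^2 - N\sum_k Z_k(c_k-1)^2 .$$
The key step is to control the negative term by Cauchy--Schwarz with weights $P_k^j$:
$$\Big(\sum_j P_k^j(g_k^j-1)\Big)^2\le NZ_k\sum_j P_k^j(g_k^j-1)^2 .$$
This gives $NZ_k(c_k-1)^2\le \sum_j P_k^j(g_k^j-1)^2$, and hence
$$\dot L\ \ge\ \sum_{j,k}P_k^j(g_k^j-1)^2 = \sum_{j,k}\frac{(\dot P_k^j)^2}{P_k^j}\ \ge 0,$$
with equality exactly when $R=0$, that is, at the steady states of Lemma \ref{fplemma}. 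Classes with $Z_k\to 0$ need some care: there $g$ is singular, so I would restrict to the forward-invariant interior where all $Z_k>0$ initially.

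For convergence, the state space (a product of simplices) is compact and $L$ is bounded above on it, since $S^j\le \max_h 1/Z_h$ and one can check that $L$ is bounded. LaSalle's invariance principle then sends trajectories to the set $\{R=0\}$. Among these steady states, the ones that attract open sets are those shown stable in Lemma \ref{fplemma}. Since $L$ increases strictly off equilibria, those stable points are local maximizers of $L$, while the unstable ones (soft or empty-class) are saddles. I regard this passage from ``$\omega$-limit set consists of equilibria'' to ``generic convergence to a local maximizer'' as the main obstacle. Handling it rigorously requires either isolatedness of the equilibria, which holds in the discrete case since rigid assignments are finitely many, or a genericity argument on initial data that avoids the unstable manifolds.

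For the ground state, note that a global maximizer is a steady state which cannot be unstable for an ascent flow. By Lemma \ref{fplemma} it is therefore rigid with all $Z_k>0$. On a rigid assignment, a point $j$ in class $k$ has $S^j=1/Z_k$, and there are $NZ_k$ such points. Hence
$$L=-N\sum_k Z_k\log Z_k ,$$
which is $N$ times the entropy of $(Z_1,\dots,Z_K)$. This is maximized uniquely at $Z_k=1/K$ (subject to divisibility in the discrete case), proving that the ground state has classes of equal size.
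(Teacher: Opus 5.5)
Your argument is correct, and on the central step it takes a different and stronger route than the paper's. The paper freezes the $Z_k$ as external parameters and uses Lagrange multipliers ($\lambda_j=-2$) to show that $\dot P_k^j$ is half the $P_k^j$-preconditioned gradient of $L$ at fixed $Z$. That establishes only your first term, $2\sum_{j,k}P_k^j(g_k^j-1)^2\ge0$. The paper then takes convergence to rigid assignments from Lemma \ref{fplemma}. It restores the coupling only a posteriori, by noting that on rigid states $-N\sum_k\tilde Z_k\log Z_k$ is maximized over $Z$ at $Z=\tilde Z$, and then maximizes the entropy. By differentiating through $Z_k(P)$ you expose the nonpositive term $-N\sum_kZ_k(c_k-1)^2$, which the frozen-$Z$ argument leaves uncontrolled. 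Your weighted Cauchy--Schwarz estimate is exactly what makes $L$ a strict Lyapunov function of the coupled flow, with $\dot L\ge\sum_{j,k}(\dot P_k^j)^2/P_k^j$. The paper's route buys an interpretation: it derives the vector field itself as an ascent direction, in an EM-like alternating ascent in $P$ and $Z$. Yours actually proves the monotonicity of $L$ that the paper announces but does not establish. The ground-state evaluation on rigid states is the same in both.

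A few refinements follow.
\begin{itemize}
\item The uniform bound you want comes from $P_h^j\le NZ_h$, which gives $1\le S^j\le N$. It also shows that $L$ and the vector field extend continuously to the closed product of simplices, so LaSalle can be run on a compact set even if trajectories approach $Z_k=0$.
\item The step to ``converges to a local maximizer'' is open in your write-up, but it is equally unproved in the paper: Lemma \ref{fplemma} is only a linear stability classification. It can hold only for generic data, since $P_k^j=1/K$ is an equilibrium with $L=0$, the global minimum.
\item Isolatedness is not available for that step. The all-soft states $P_k^j=Z_k$ form a $(K-1)$-parameter continuum, so the genericity argument is what is really needed.
\item The ground-state claim needs no stability input. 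By Jensen and $\sum_j(P_h^j)^2\le NZ_h$, one has $L\le N\log\big(\frac1N\sum_jS^j\big)\le N\log K$, with equality iff the assignment is rigid with all $Z_k=1/K$. This also avoids relying on Lemma \ref{fplemma}'s list of steady states. For $K\ge3$ that list omits partially soft equilibria with $P_k^j=Z_k/\sum_{h\in A}Z_h$ for $k\in A$, supported on a proper subset $A$ of the classes.
\end{itemize}
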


\begin{proof}
Consider first the direction of maximal ascent of $L$ when all $P_k^j$ are strictly positive. Bringing in Lagrange multipliers $\lambda_j$ to enforce the condition that $\sum_k P_k^j = 1$, we have
$$ L \rightarrow  \sum_j \left[ \log\left(\sum_k \frac{{P_k^j}^2}{Z_k} \right) + \lambda_j \left(\sum_k P_k^j - 1 \right) \right], $$
so
$$ \frac{\partial L}{\partial P_k^j} = 2 \frac{\frac{P_k^j}{Z_k}}{\sum_h \frac{{P_k^j}^2}{Z_h}} + \lambda_j,$$
where we have temporarily considered the $Z_k$ as fixed external parameters. If $\nabla L \ne 0$, any
$$ dP_k^j = \alpha_k^j  \frac{\partial L}{\partial P_k^j}, \quad \alpha_k^j > 0 $$
is a direction of ascent of $L$, since
$$ dL = \sum_{j, k}  \frac{\partial L}{\partial P_k^j} dP_k^j = \sum_{j, k} \alpha_k^j \left(\frac{\partial L}{\partial P_k^j}\right)^2 > 0 . $$
In particular, we can adopt $\alpha_k^j = P_k^j$, which yields
$$ dP_k^j = 2 \frac{\frac{{P_k^j}^2}{Z_k}}{\sum_h \frac{{P_k^j}^2}{Z_h}} + \lambda_j P_k^j, $$
where the condition that $\sum_k P_k^j = 1$ (and so $\sum_k dP_k^j = 0$) requires that
$$ \lambda_j = - 2, $$
so 
$$ dP_k^j = 2 \left(\frac{\frac{P_k^j}{Z_k}}{\sum_h \frac{{P_k^j}^2}{Z_h}} -1 \right) P_k^j
= 2 \frac{d P_k^j(t)}{d t}, $$
with $\frac{d P_k^j(t)}{d t}$ as defined in (\ref{Reaction2}). It follows that the dynamics in (\ref{Reaction2}) ascends $L$ when the $Z_k$ are fixed external parameters and all $P_k^j$ are strictly positive (When some reach zero, the condition that $P_k^j \ge 0$ becomes active, which again agrees with the dynamics in (\ref{Reaction2}), which freezes zero values of $P_k^j$.)

We have shown in lemma \ref{fplemma} that this dynamics converges to rigid assignments $P_k^j = \delta_k^{k_j}$, where $k_j$ is the class $k$ assigned to observation $j$. Then
$$ L = \sum_j \log\left(\sum_k \frac{{P_k^j}^2}{Z_k} \right) \rightarrow 
- \sum_j \log\left(Z_{k_j}\right) = - N \sum_k \tilde{Z}_k \log\left(Z_{k}\right), $$
where
$$ \tilde{Z}_k = \frac{1}{N} \sum_j P_k^j $$
is the actual size of class $k$ under the current assignment. But this expression for $L$, a relative entropy, achieves its maximum when $Z_k = \tilde{Z}_k$. Thus maximizing $L$ over the external parameters $Z_k$ yields their correct definition in terms of the $P_k^j$.

Finally, maximizing the resulting entropy
$$ L = - N \sum_k Z_k \log\left(Z_{k}\right) $$
over $Z_k$, yields equipartition:
$$ Z_k = \frac{1}{K}, $$
proving that the ground state of (\ref{Reaction2}) has a rigid assignment to classes with equal size.

\end{proof}

Putting together the results of lemmas \ref{fplemma} and \ref{LLlemma}, we conclude that the reactive component of the algorithm leads to rigid assignments where no class is left empty, with a tendency toward equally sized classes, which constitute the ground state. When we add the diffusive component, the objective function $L$ is augmented by 
$$ C \propto \sum_k \int \|\nabla P_k(x)\|^2 \rho(x) \ dx , $$
which penalizes jumps between classes, favoring interfaces between them that are small in size and located in areas of small density $\rho$.

\subsection{Time stepping}

A key to an efficient implementation of the methodology is that one can adopt time-steps $\Delta t$ of order one without compromising the algorithm's stability. This is proved in the following lemma.

\begin{lemma}\label{TSlemma}
Replacing the system of ODEs in (\ref{Pt2}) by the semi-implicit Euler scheme
\begin{equation}
 \frac{\left(P_k^j\right)^{n+1} - \left(P_k^j\right)^n}{\Delta t}  =  \left(R_k^j\right)^n + \nu \left(D_k^j\right)^{n+1}, \label{Euler}
\end{equation} 
with 
\begin{equation}
   \Delta t \le 1,
\label{dt}
\end{equation}
preserves the properties that $P_k^j \ge 0$ and $\sum_k P_k^j = 1$. 
Here the $\left(\cdot\right)^n$ refer to evaluating the corresponding terms at time-step n.

\end{lemma}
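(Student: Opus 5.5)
Rewrite the scheme (\ref{Euler}) as a linear system for the new iterate. For each class $k$, writing $P_k^n$ for the vector $(P_k^j)^n$ and $L$ for the graph Laplacian (\ref{gL}), the scheme reads
$$ \left(I - \nu \Delta t\, L\right) P_k^{n+1} = P_k^n + \Delta t\, R_k^n =: Q_k^n . $$
The proof then splits into two steps: (i) the explicit half-step $Q_k^n$ is nonnegative and sums to one over $k$ when $\Delta t \le 1$; (ii) the implicit diffusive solve preserves both properties.

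For (i), factor the reaction term as in (\ref{Req}):
$$ Q_k^j = P_k^j\left[(1-\Delta t) + \Delta t\, \frac{P_k^j/Z_k}{\sum_h (P_h^j)^2/Z_h}\right]. $$
With $\Delta t \le 1$ and $P^n \ge 0$ (and $Z_h \ge 0$), both terms in the bracket are nonnegative, so $Q_k^j \ge 0$. This is exactly where the bound (\ref{dt}) is used. Summing over $k$ gives
$$ \sum_k Q_k^j = (1-\Delta t)\sum_k P_k^j + \Delta t\,\frac{\sum_k (P_k^j)^2/Z_k}{\sum_h (P_h^j)^2/Z_h} = 1 . $$
Classes with $Z_k=0$ must have $P_k^j = 0$ for all $j$. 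I would adopt the convention that such terms are dropped; since $P_k^j/Z_k$ then never enters, these classes stay identically zero.

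For (ii), set $A = I - \nu\Delta t\, L$ with $\nu \ge 0$. By (\ref{gL}), $A$ has off-diagonal entries $-\nu\Delta t(c_{i,j}+c_{j,i}) \le 0$, and its diagonal is $1+\nu\Delta t\sum_j (c_{i,j}+c_{j,i})$. Hence $A$ is strictly diagonally dominant with row-sum excess exactly $1$: it is a nonsingular M-matrix, and $A^{-1}$ is entrywise nonnegative. This gives $P_k^{n+1} = A^{-1}Q_k^n \ge 0$.

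I could prove this directly by a discrete maximum principle. Take the index $i$ where $P_k^{n+1}$ is minimal. Row $i$ gives $P_k^{i,n+1} + \nu\Delta t\sum_j (c_{i,j}+c_{j,i})(P_k^{i,n+1}-P_k^{j,n+1}) = Q_k^i \ge 0$. Each term in the sum is $\le 0$ at the minimum, so $P_k^{i,n+1} \ge Q_k^i \ge 0$.

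For the sum condition, $A$ is the same matrix for every $k$. Summing the systems over $k$ gives $A\left(\sum_k P_k^{n+1}\right) = \sum_k Q_k^n = \mathbf{1}$. Since $L\mathbf{1}=0$ (rows of $L$ sum to zero), $A\mathbf{1}=\mathbf{1}$. Invertibility of $A$ then yields $\sum_k P_k^{n+1} = \mathbf{1}$. By induction on $n$ the properties hold at all steps.

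The main subtlety I expect is the M-matrix/maximum-principle step, together with the degenerate case $Z_k = 0$ in the reaction term. The former is handled cleanly by the minimum-index argument above. For the latter, I would note that $Z_k=0$ forces $P_k \equiv 0$, and this is preserved.
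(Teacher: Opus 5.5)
Your proof is correct and follows essentially the same route as the paper: the bound $R_k^j \ge -P_k^j$ combined with $\Delta t \le 1$, then a discrete minimum principle at the index where $(P_k)^{n+1}$ is smallest, which uses the nonnegative off-diagonal entries of $L$. Two points the paper passes over quickly are made precise in your version: the invertibility of $I-\nu\Delta t\,L$ together with $L\mathbf{1}=0$ for the sum condition, and the treatment of classes with $Z_k=0$.
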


\begin{proof}

The following two properties follow immediately from the definitions of $R$ and $D$: 
\begin{enumerate}

\item If, for all $j$, $\sum_k P_k^j = 1$, then $\sum_k R_k^j = \sum_k D_k^j = 0$. (P1)

\item If all $P_k^j$ are non-negative, $R_k^j$ is bounded below by $-P_k^j$. (P2)
\end{enumerate}
In view of P1, the sum over classes $\sum_k P_k^j$ remains always equal to one.
P2, on the other hand, jointly with the constraint (\ref{dt}) for $\Delta t$,  guarantee that the $P_k^j$ remain in the interval $[0,1]$. To see this,
consider the index $j$ where $(P_k)^{n+1}$ achieves its minimal value. We have
\begin{eqnarray*}
 \left(P_k^j\right)^{n+1} &=& \left(P_k^j\right)^n + \Delta t  \left(R_k^j\right)^n + \nu \Delta t \left(D_k^j\right)^{n+1} \\
 &\ge& (1 - \Delta t) \left(P_k^j\right)^n + \nu \Delta t \left(D_k^j\right)^{n+1} \\
 &\ge& 0,
\end{eqnarray*}
since $\Delta t \le 1$ by design, $\left(P_k^j\right)^n \ge 0$ by inductive hypothesis, and 
\begin{eqnarray*}
 \left(D_k^j\right)^{n+1} &=&
\sum_i L_j^i \left(P_k^i\right)^{n+1} 
= \sum_{i \ne j} L_j^i \left(P_k^i\right)^{n+1} + L_{jj} \left(P_k^j\right)^{n+1} \\
&=& \sum_{i \ne j} L_j^i \left(P_k^i - P_k^j\right)^{n+1} \ge 0
\end{eqnarray*}
by the minimality of $\left( P_k^j\right)^{n+1}$ and the non-negativity of the off-diagonal entries of $L$. Thus all $P_k^j$ remain non-negative. Since $P_k^j$ add to one for each $j$, it follows that none can exceed one either.

\end{proof}

\subsection{The continuum limit of multi-diffusions} \label{MultiDiff}

Recall from section \ref{sec:manyfeatures} the multi-diffusive operator
\begin{equation}
 D_d P_k = \sum_l \nu_l \sum_i L(l)_i^j P_k^i,
 \label{DM}
\end{equation}
where each $L(l)$ acts over a subset of the coordinates of $x$.
In the continuous case, one might naively think that the discrete operator
should converge, when each network is associated with a feature $x_l$, to the non-isotropic diffusion
$$
 \sum_l \nu_l \nabla_l \cdot \left(\rho(x_l) \nabla_l P_k(x, t)\right),
$$
where $\nabla_l$ is the gradient operator acting along the $l$th feature and
$$ \rho^l(x_l) = \int \rho(x) \prod_{h\ne l} dx_h $$
is the $x_l$-marginal of $\rho(x)$.
Yet the true extension of (\ref{MultiDiffusion}) to the continuous scenario must involve a nonlocal process: points that are far in one connectivity network, even not connected at all, may still diffuse in another. Regular diffusive processes can never achieve this.

In order to extend (\ref{DM}) to a continuous setting, we consider two families of averaging operators:
$$ \overline{u}^{\,l}\left(x_l\right) = \frac{\int u(x) \rho(x) \prod_{h\ne l} dx_h}{\int \rho(x) \prod_{h\ne l} dx_h}, $$
which averages $u(x)$ along all directions but $x_l$, and a local averaging operator over $x_l$, such as
$$ \widehat{v}^{\,l}\left(x\right) = \int K_a\left(s-x_l\right) v\left(x_1, \ldots, x_{l-1}, s, x_{l+1}, \ldots x_L\right) \ ds, $$
where $K_a$ is a kernel function with mass 1, and  bandwidth $a$ that approaches zero. This  kernel  defines a diffusion operator along the $x_l$-direction:
\begin{equation}
 D_l v = 2 \frac{\widehat{v}^{\, l}\left(x\right) - v\left(x\right)}{a^2}. 
 \label{av_diff}
\end{equation}
For instance, adopting
$$ K_a(s-z) = \frac{\delta(s - z + a) + \delta(s - z - a)}{2} $$
yields, in the limit $a\rightarrow 0$,
$$ D_l v = \frac{v\left(\ldots, x_l + a, \ldots \right) - 2 v\left(\ldots, x_l, \ldots \right) + v\left( \ldots, x_l - a, \ldots \right)}{a^2} \rightarrow \frac{\partial^2 v}{\partial {x_l}^2}. $$
More generally, any local averaging operator $\widehat{.}$ defines a diffusion operator along $x_l$ through (\ref{av_diff}). Then, the continuous limit of (\ref{DM}) becomes
$$ D_d P_k =  \sum_l \nu_l \left( \widehat{\overline{P_k}}^{\, l} - P_k \right).$$
This is an interesting sum of non-local diffusion operators: the $l$-th operator subtracts $P_k$ not from its local average, but from an average that, for the $l$-th diffusive network, is local only in $x_l$, but global over all other variables. In two dimensions, $D_d$ can be visualized as a diffusion based on averaging over a thin cross of width $a$ with the point $(x, y)$ at its center.

\section{Simulations in the continuum limit}
\label{sec:analysis}

Combining a gradual, nonparametric Bayesian upgrade of the soft probability assignments $P_k(x,t)$ with a diffusive process to make the assignments consistent with the metric of the space $X$ gave rise to the system (\ref{ipde}). This set of $K$  reaction diffusion equations is non-conventional, as it includes a non-local element, embodied in the integrals defining the $Z_k(t)$, and has an adaptive diffusivity that automatically balances the reactive and diffusive components of the evolution at all times. Experimenting with this system in the continuum limit, in addition to providing insight into the mechanics of the clustering procedure in the limit of infinitely many samples, is interesting per se, as it turns out to display interesting dynamics.

We consider for simplicity the case with two classes $1$ and $2$. Then $P_2 = 1 - P_1$,  $Z_2 = 1 - Z_1$, and we have
$$ \sum_{k=1}^2 \frac{{P_k}^2}{Z_k} = \frac{{P_1}^2 -2 Z_1 P_1 + Z_1}{Z_1 (1-Z_1)}$$
so the system in (\ref{ipde}) reduces to a single integro-differential equation for $P(x, t)=P_1(x, t)$:
\begin{equation}
 \frac{\partial P}{\partial t} = R + \nu D,
\label{TwoClasses}
\end{equation}
where
$$ R = \left(\frac{P(1-Z)}{P^2 -2 Z P + Z}-1\right) P
= \frac{P(1-P)(P-Z)}{(P-Z)^2 + Z(1-Z)}, $$
with
\begin{equation}
 Z(t) = \int P(x, t) \rho(x)\ dx,
 \label{Z_int}
\end{equation}
and
$$ D = \frac{1}{\rho(x)}\nabla\left(\rho(x) \nabla P\right). $$
The diffusive evolves adaptively according to
$$ \nu = \alpha \sqrt{\frac{\int R^2 dx}{\int D^2 dx}}. $$

Without the coupling of $Z$ and $P$ through (\ref{Z_int}), equation (\ref{TwoClasses}) would be a regular reaction diffusion equation, with three fixed critical points: the two stable ones $P=1$ and $P=0$, separated by the unstable $P=Z$. Thus the solution would rapidly evolve into regions of uniform $P=1$ or $P=0$, separated by a diffusive interface (when $\nu$ is constant). With $Z$ smaller that $\frac{1}{2}$, the potential well at $P=1$ would be deeper than the one at $P=0$, so diffusion would act across the interface, displacing it until $P=1$ everywhere (assuming that $\rho(x)$ is nowhere $0$) Similarly, if $Z > \frac{1}{2}$, then $P$ will converge to a uniform $P=0$. 

Yet in our system, $Z$ is the expected value of $P$, evolving as $P$ does. This modifies the drift process described above, shifting its uniform final state to a steady configuration with well-balanced classes. If at some point the mass with $P=1$ is larger than the one with $P=0$, then $Z > \frac{1}{2}$, and the drift towards $P=0$ makes $Z$ decrease. We first illustrate the dynamics with one-dimensional simulations. In the simplest example with constant $\rho$, this leads to two equal clusters and $Z=\frac{1}{2}$, as shown in figure \ref{fig:Cont1d1} (top). When $\rho(x)$ is not uniform, the clusters may accommodate their sizes so that their interfaces lie in areas of smaller density, as seen in the example in figure \ref{fig:Cont1d1} (middle and bottom). If the underlying $\rho(x)$ has two well separated ``clusters'' then the algorithm will choose boundaries to respect the natural conditional probability, i.e. the ratio of $Z$ to $1-Z$ will converge to the ratio of the mass in each cluster (middle). If the underlying probability density has two peaks, the algorithm will converge to a cluster that bisects the density (bottom).

\begin{figure}[!htpb]
\begin{center}
\includegraphics[width=.65\textwidth]{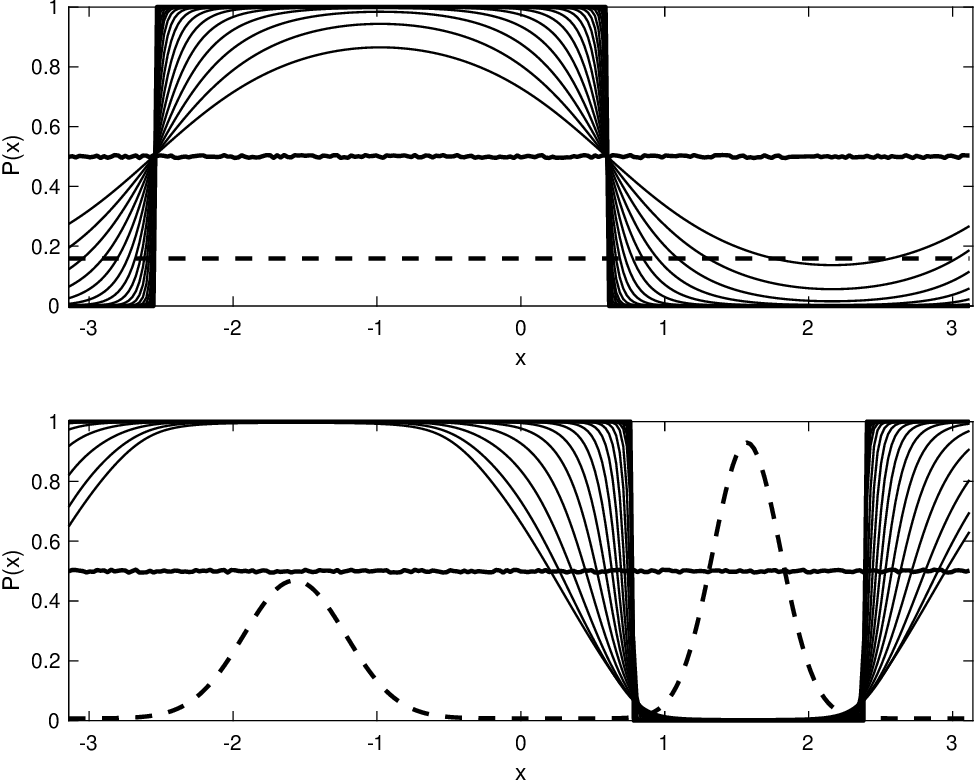}

\includegraphics[width=.65\textwidth]{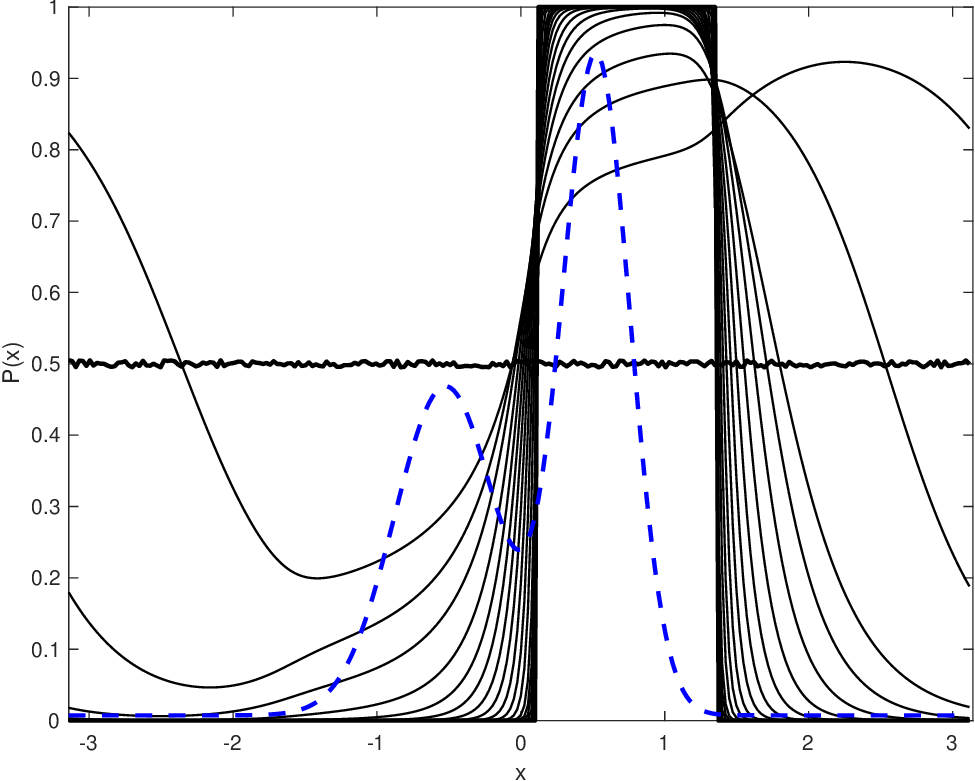} 
\caption{Continuous Dynamics on $S^1$. In all figures the dashed line is the underlying probability $\rho(x)$, the other lines correspond to $P(x)$ at different times, from the initial line with $P(x)$ initialized to $0.5$ plus noise, to the final assignment where $P(x) \in \{0,1\}$. Top: underlying probability $\rho(x)$ is uniform, and algorithm converges to equipartitioned clusters with arbitrary boundaries (depending on initial conditions). Middle: underlying probability is given by 2 well separated Gaussians with different masses. Bottom: underlying probability is given by 2 mixed Gaussians with different masses.}
\label{fig:Cont1d1}
\end{center}
\end{figure}

The simulations shown in figure \ref{fig:Cont1d1} used  an implicit time-stepping for the diffusion and an explicit one for the reaction. The timestep was chosen to be 1, with  256 spatial points in $[0,2\pi)$ and periodic boundaries, reflecting the dynamics on $S^1$. In figure \ref{fig:Cont1d1} we chose $\alpha = 0.95$. The effect of changing $\alpha$ to values away from $1$ is shown in figure \ref{fig:Cont1d2}. Here, the same computation as in figure \ref{fig:Cont1d1} (middle) was repeated with a lower value of $\alpha$ where nonlinearity selects arbitrary clusters (depending on the randomised initial conditions) and a larger value of $\alpha$ where diffusion dominates and clusters are not identified with $P$ converging to $1/2$.

\begin{figure}[!htpb]
\begin{center}

\includegraphics[width=.65\textwidth]{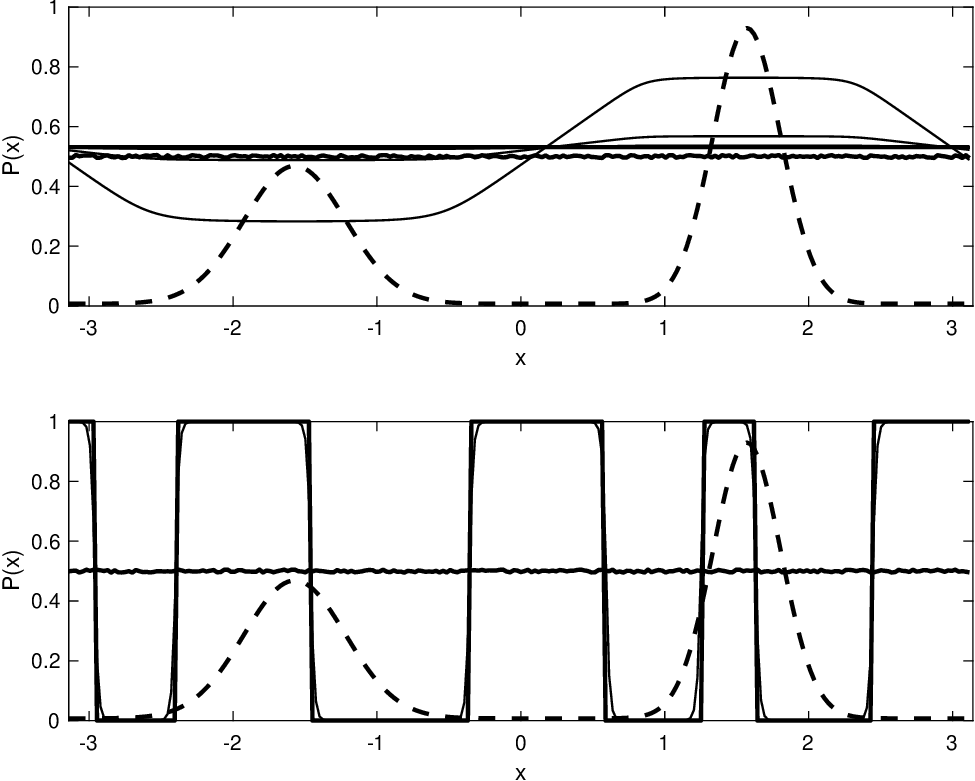}

\caption{The effect of unbalanced diffusion and nonlinearity. Top: $\alpha = 1.2$ where diffusion is too strong to result in clustering, despite the solution tending to cluster at intermediate times. Bottom: $\alpha = 0.5$ where nonlinearity selects arbitrary clusters dependent on the (random) initial data.
}
\label{fig:Cont1d2}
\end{center}
\end{figure}

Next, we consider simulations in two dimensions with results of a typical simulation shown in figure \ref{fig:2d}. One observes initially a coarsening dynamics, as expected in reaction diffusion problems, followed by the separation into sharply defined regions that follow the intuitive classification of the features of the underlying distribution. The low probability background is assigned primarily to one of the clusters. This simulation used a $256\times256$ cartesian grid with a time-step of $0.1$ and $\alpha=1$. A smaller time-step was used in order for the variable diffusivity to more accurately reflect the balance of nonlinearity and diffusion. In parallel to the previous case, too low values of $\alpha$ freeze in sharp domain boundaries too soon, whereas too large values of $\alpha$ again lead to uninformative uniform $P$.

\begin{figure}[!htpb]
\begin{center}
\begin{tabular}{lll}
\includegraphics[width=.45\textwidth]{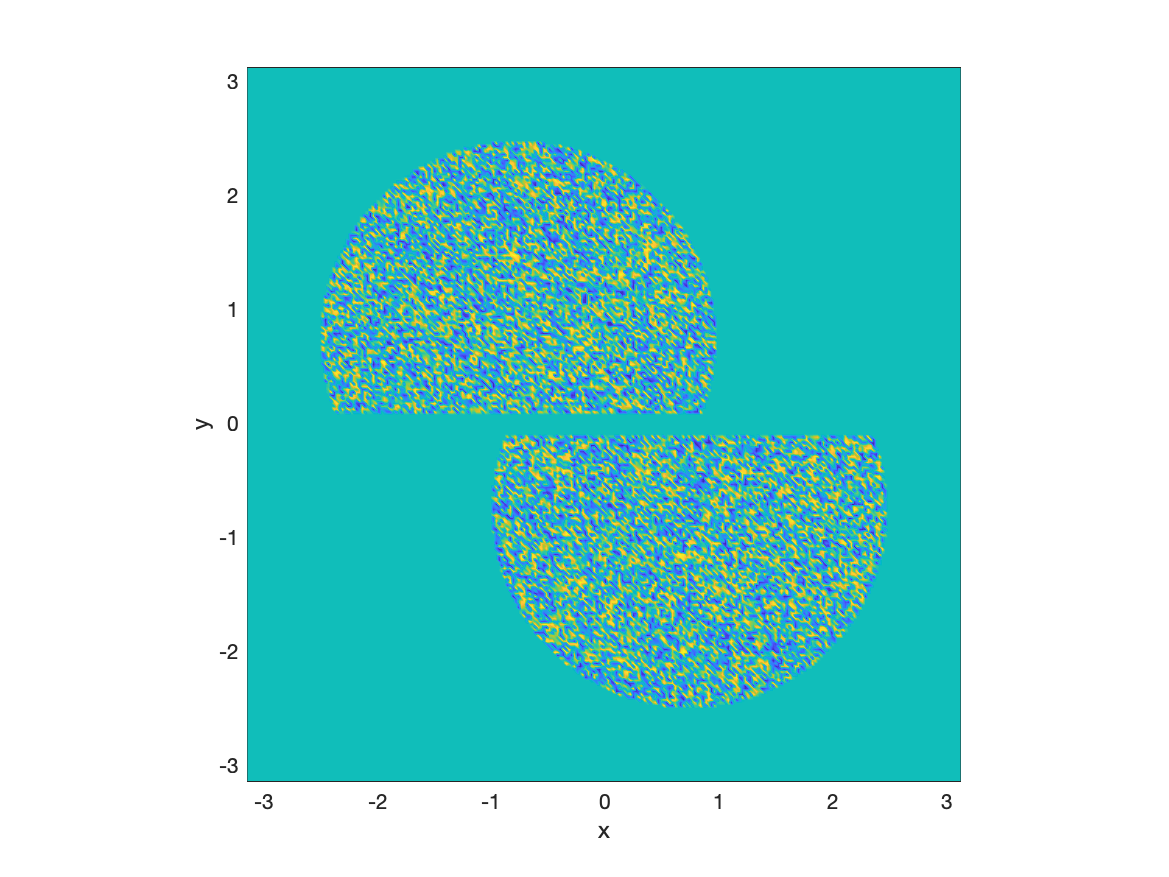} &
\includegraphics[width=.45\textwidth]{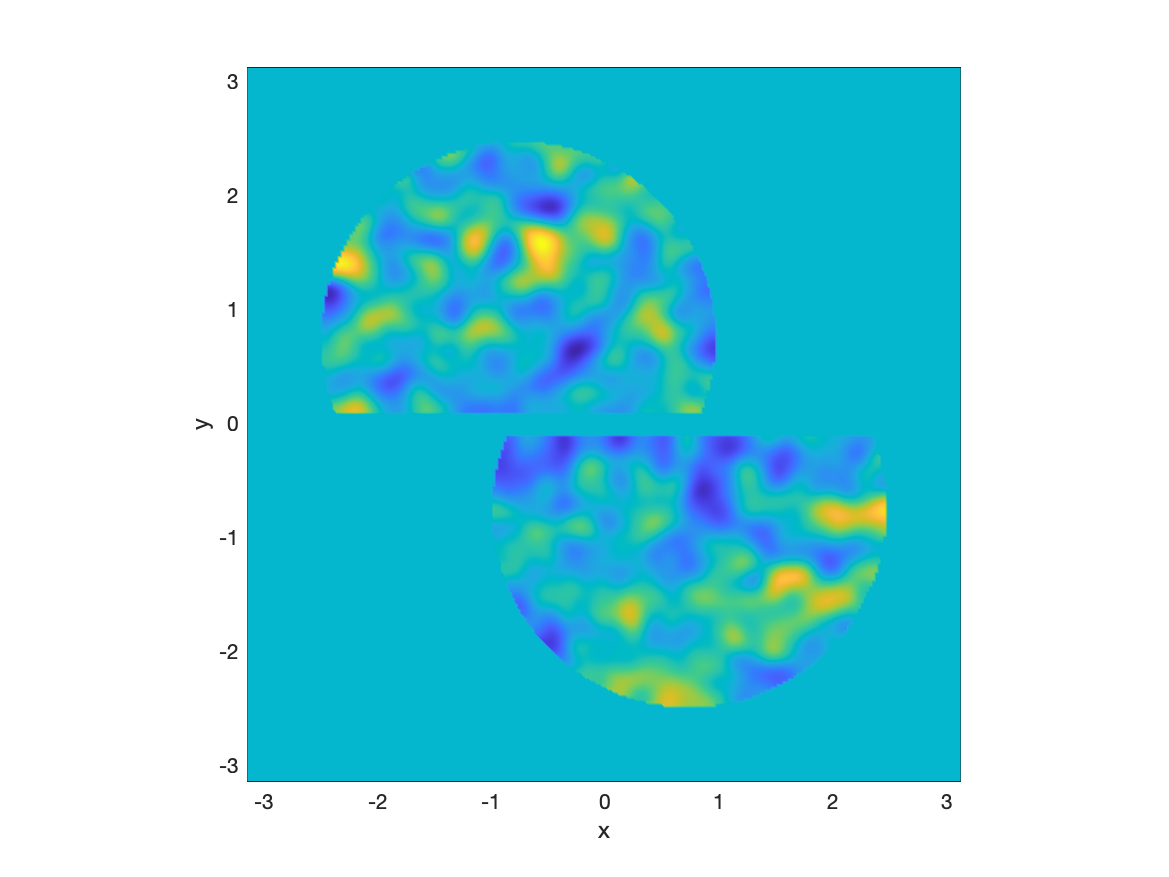} \\
\includegraphics[width=.45\textwidth]{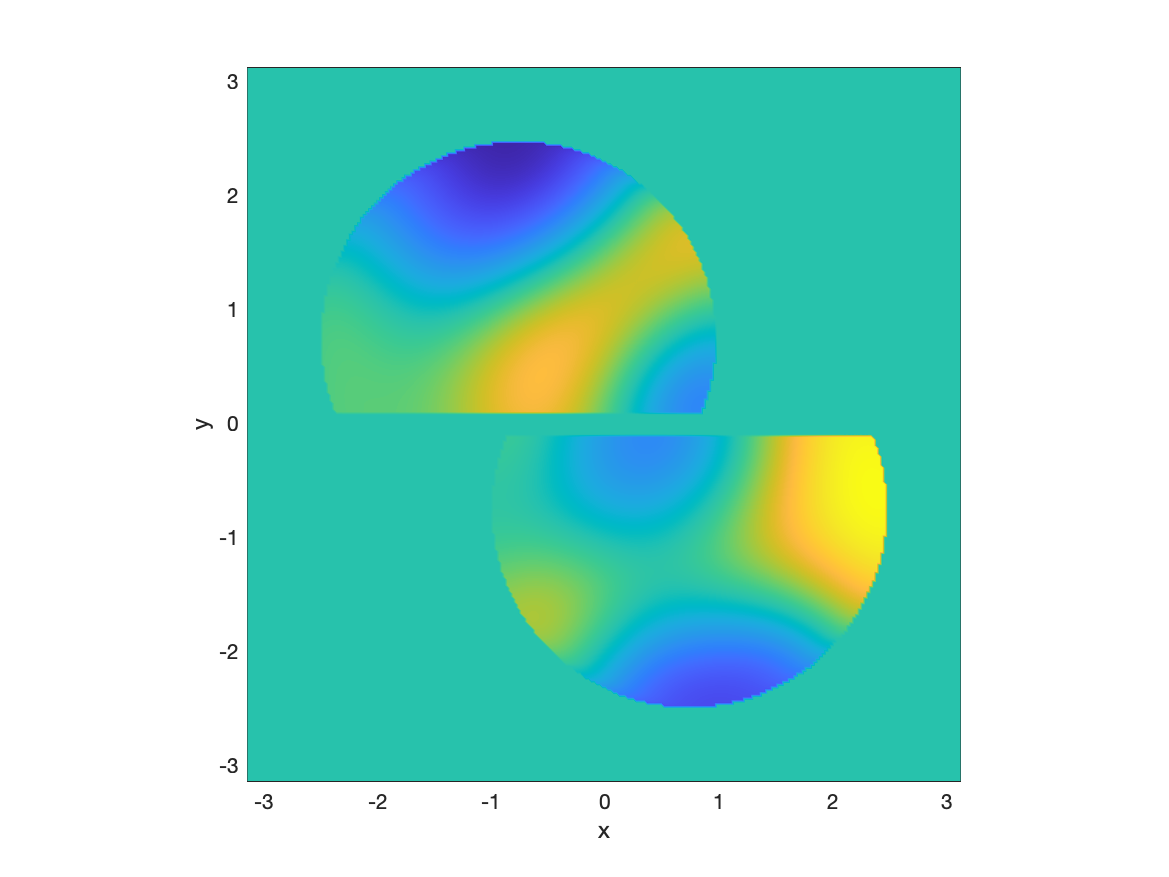} &
\includegraphics[width=.45\textwidth]{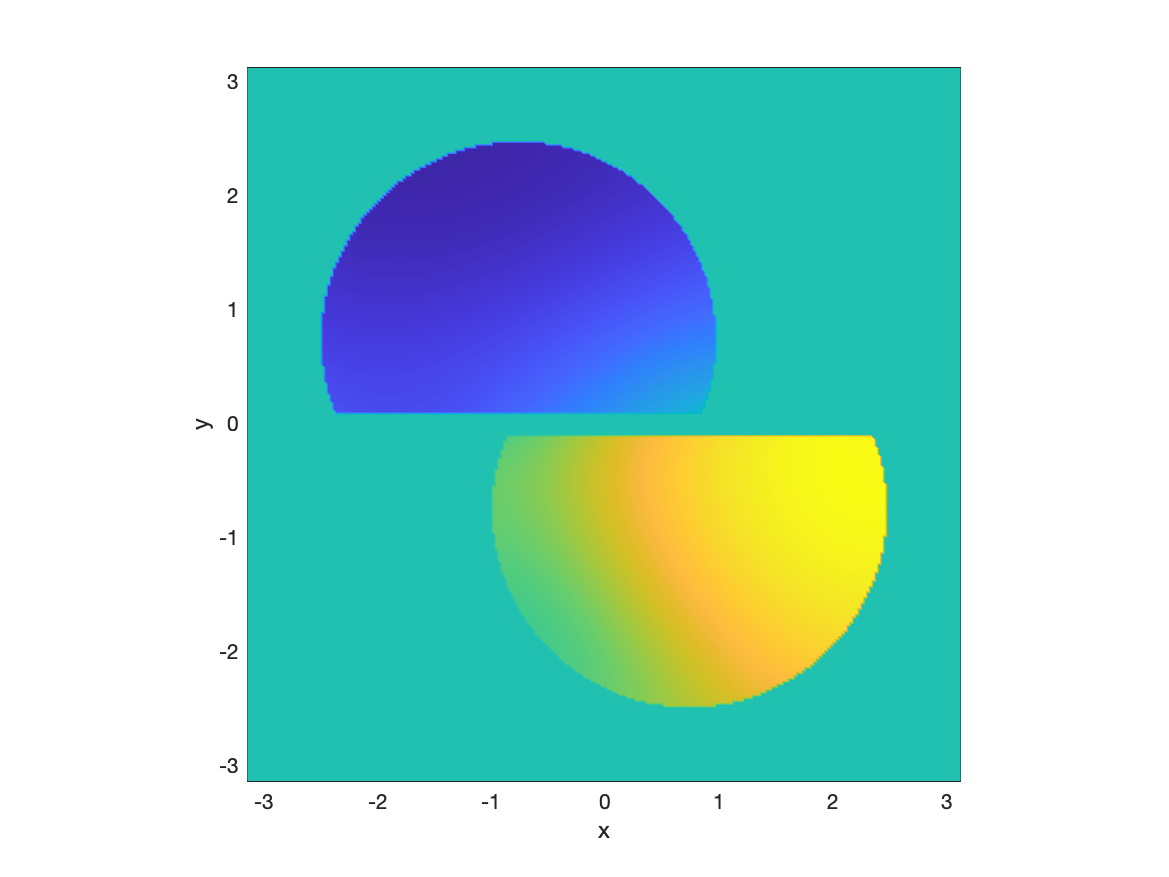} \\
\includegraphics[width=.45\textwidth]{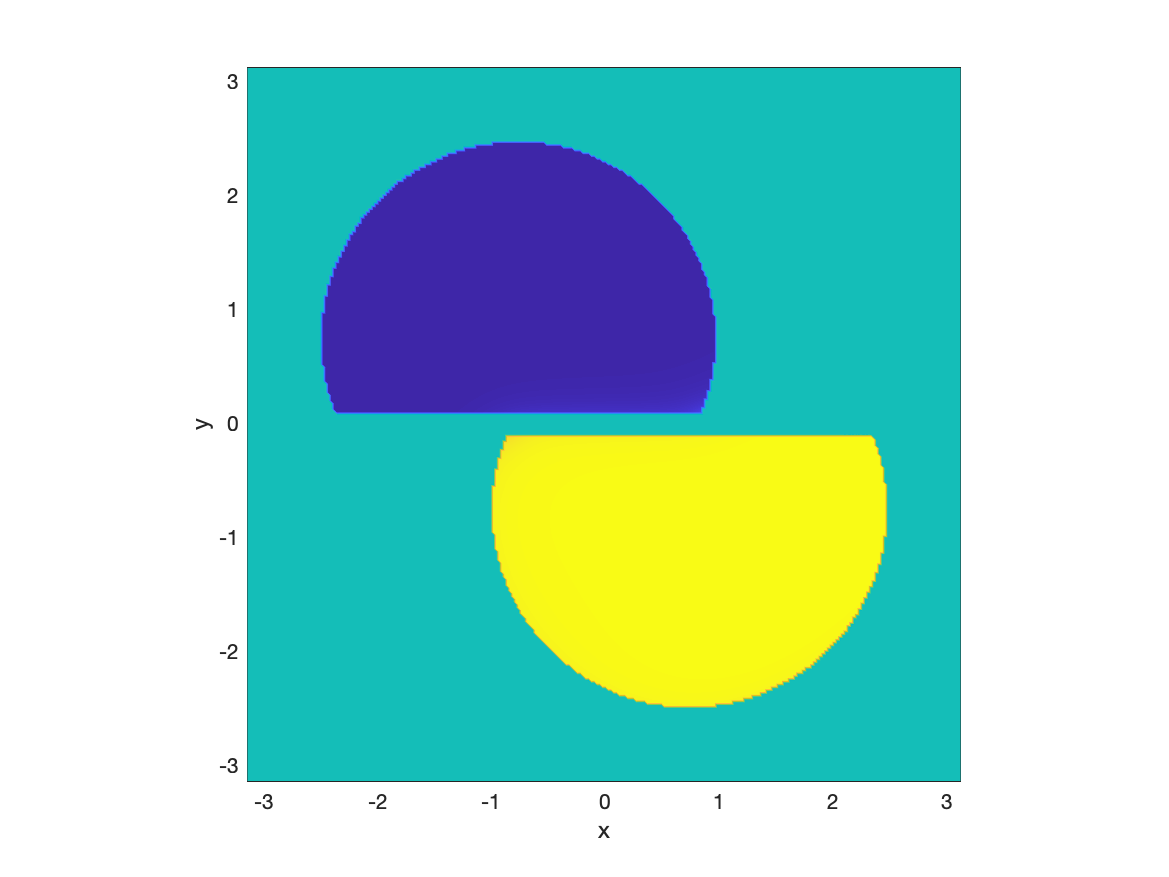} &
\includegraphics[width=.45\textwidth]{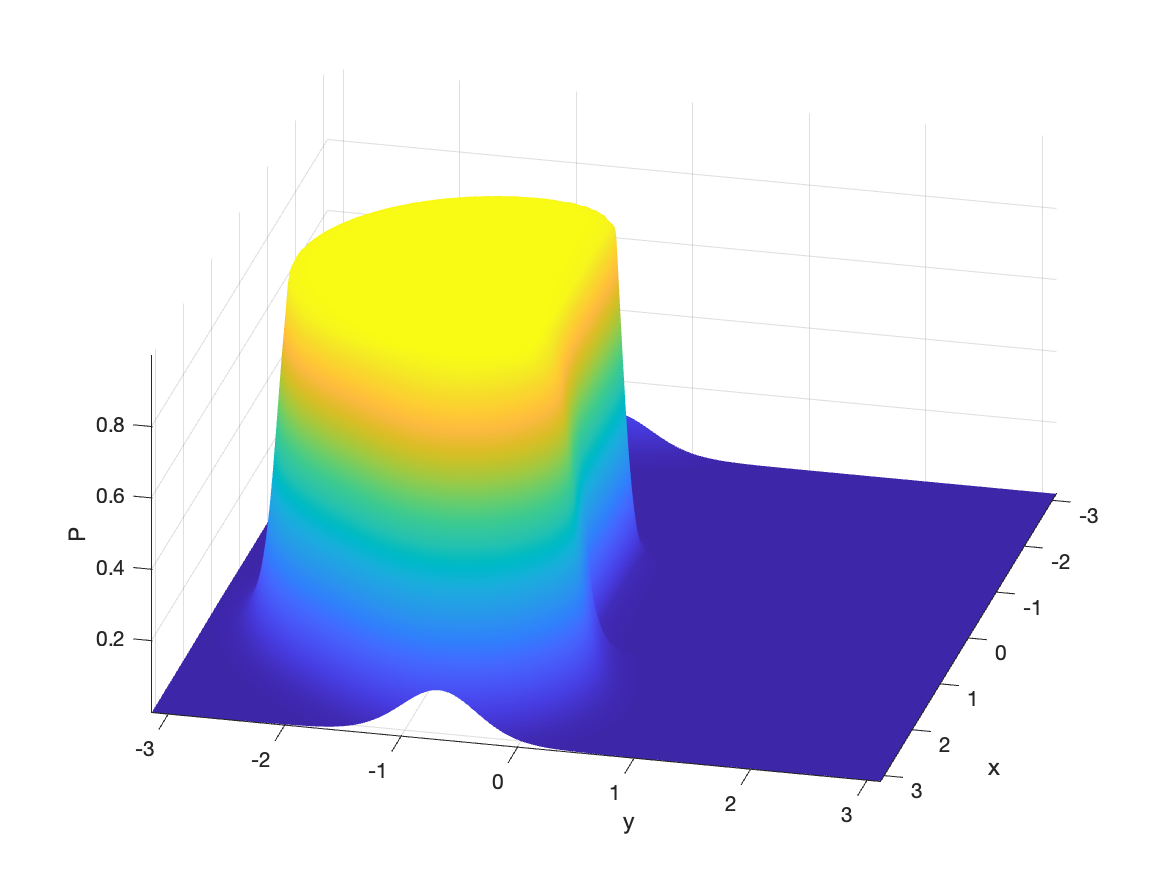} \\
\end{tabular}
\vskip-0.5cm
\caption{Two dimensional runs. The underlying distribution $\rho(x,y)$ is composed of three constants: $\rho(x,y)=0.0445$ in the upper truncated disc, $\rho(x,y)=0.085$ in the lower truncated disc, and $\rho(x,y)=0.004$ in the remaining part of the domain. The figures show $P(x,y,t)$ restricted (artificially) for clarity on the truncated discs at $t=0,3,6,15,21$ from top left to bottom left. The (unrestricted) function $P(x,y,21)$ is shown at bottom-right. In the colour scale blue corresponds to $P=1$ and yellow to $P=0$ and therefore assignment to the 2 clusters.}
\label{fig:2d}
\end{center}
\end{figure}

Finally we consider the case discussed in section \ref{MultiDiff} where diffusions in the $x$ and $y$ directions are independent and nonlocal. The results are shown in figure  \ref{fig:multidiff}, where the initial data is similar to other experiments, small random fluctuations about $P=1/2$. The diffusion, which is strong at initial times, results in a characteristic ``tartan'' pattern as the diffusion operator averages over crosses centred at points in the domain. At later times the evolution converges to clusters which include observations that may be distant in one of the variables so long as they are close in another. In this particular case the two Gaussians which have mass near $y=0$ are clustered together despite being distant in $x$. Conversely, the two Gaussians that are closest end up in different clusters because they do not share similarity in neither $x$  nor $y$.

\begin{figure}[!htpb]
\begin{center}
\begin{tabular}{lll}
\includegraphics[width=.45\textwidth]{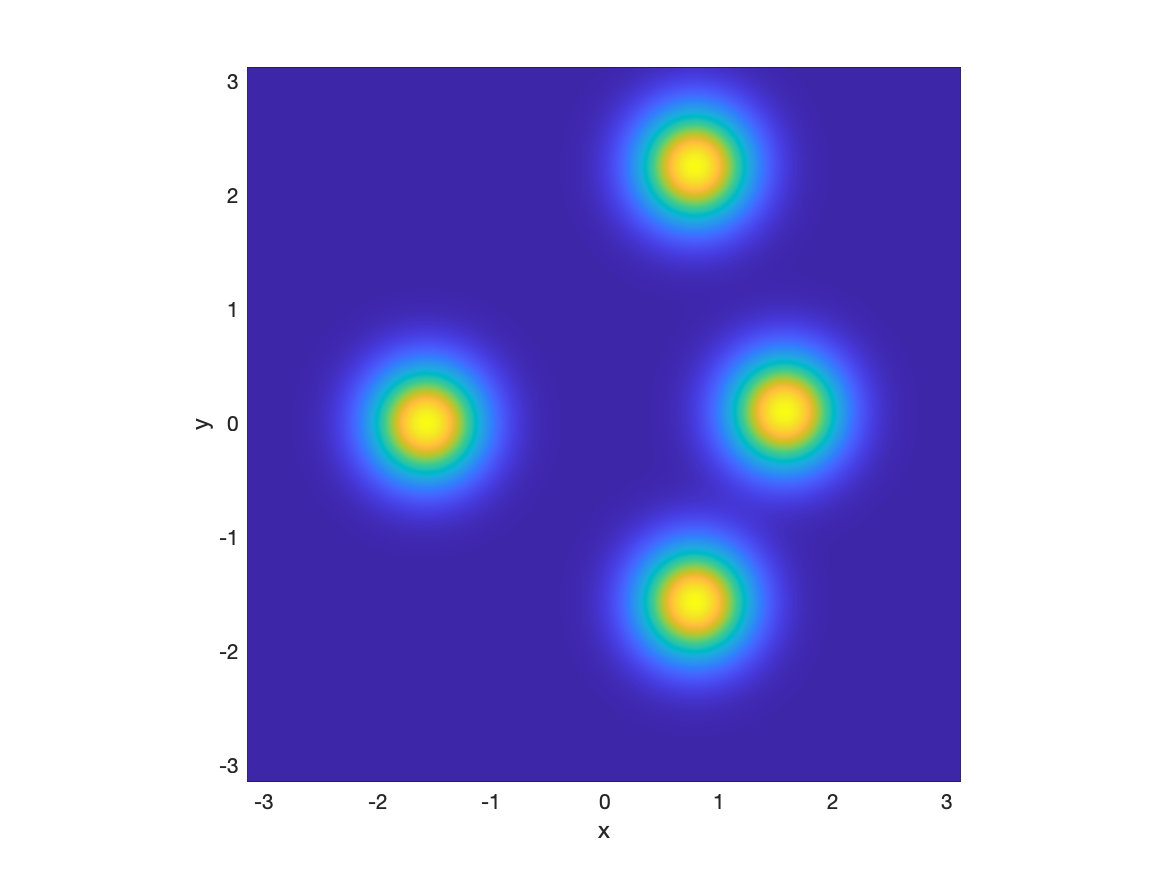} &
\includegraphics[width=.45\textwidth]{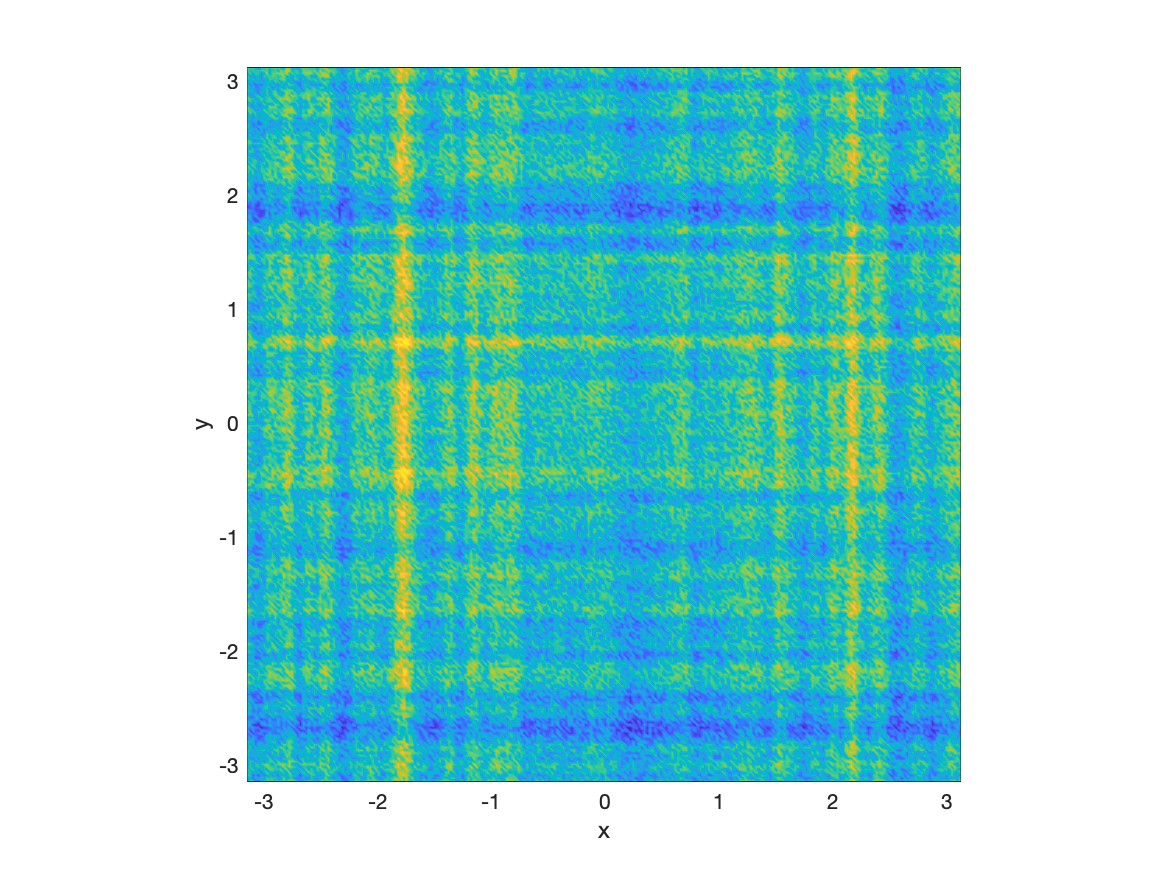} \\
\includegraphics[width=.45\textwidth]{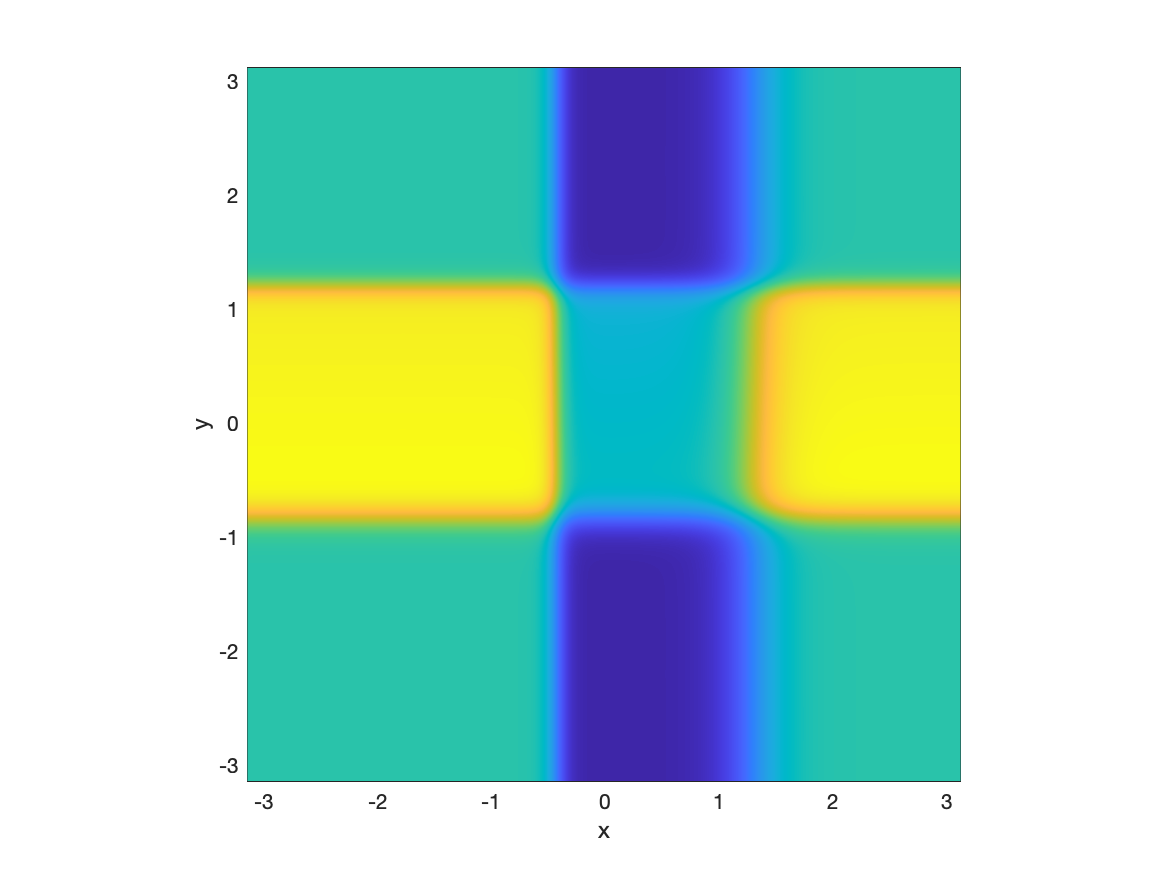} &
\includegraphics[width=.45\textwidth]{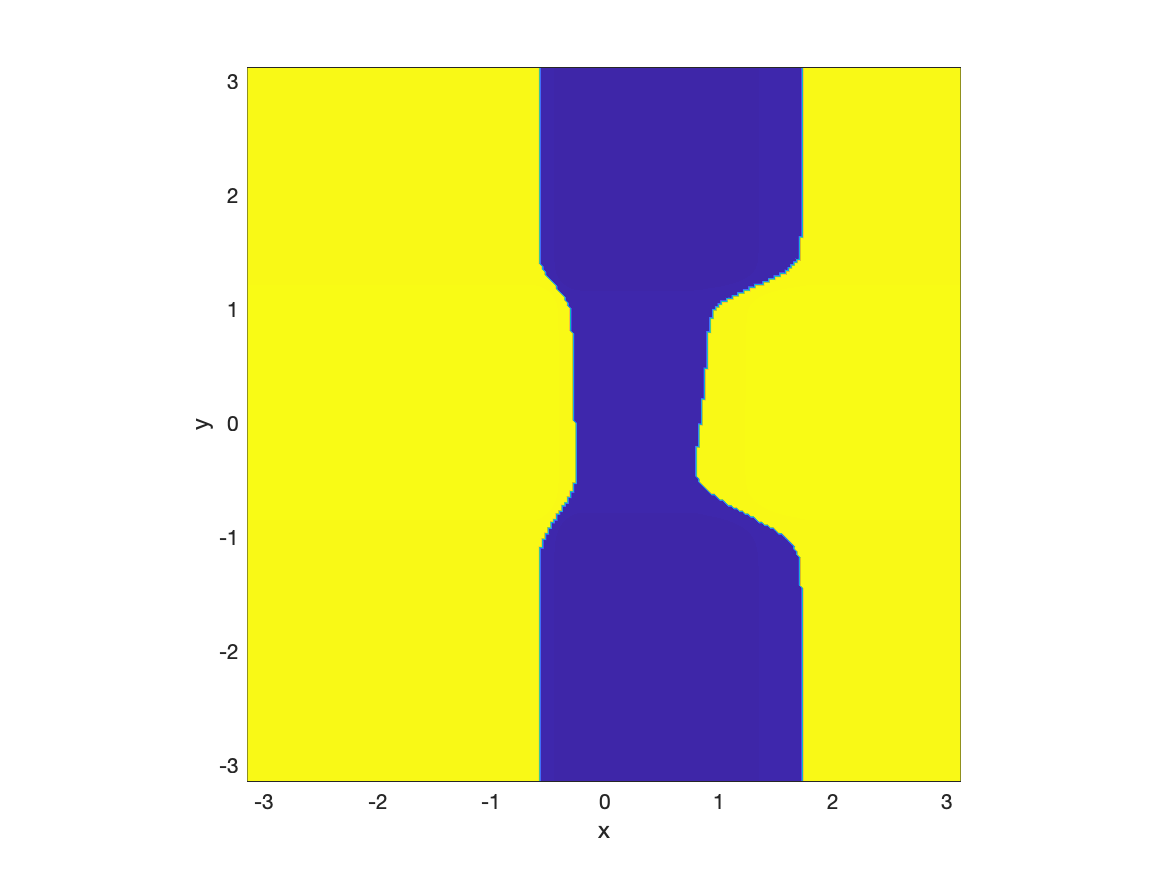} \\
\end{tabular}
\vskip-0.5cm
\caption{Multi-diffusive experiment. The underlying distribution $\rho(x,y)$ is composed of four equal Gaussians (see top left), distant from each other but two having similar $x$-coordinates for their centre and two having similar $y$-coordinates for their centre. The figures show $P(x,y,t)$ $t=5,10,40$ from top right to bottom right. In the colour scale, blue corresponds to $P=1$ and yellow to $P=0$ and therefore assignment to 2 clusters. Note that at the final time the clustering is according to proximity in one of the variables only. The background with constant low probability is assigned to minimize interfaces between $P=0$ and $P=1$.}

\label{fig:multidiff}
\end{center}
\end{figure}

\section{Summary}

This article develops a novel methodology for the unsupervised and semisupervised learning of categorical labels. In the limit of infinitely many sample points, the proposed algorithm converges to a set of reaction diffusion equations. Critical to the evolution of the system is an adaptive diffusivity that balances the reaction and diffusive terms through a parameter $\alpha$. The final state of the system experiences a fast transition in terms of this parameter, from largely unstructured rigid assignment to classes for $\alpha < 1$, to globally uniform assignments for $\alpha > 1$. For $\alpha \approx 1$, the evolution converges to rigid assignments to the various states, in regions separated by spatially smooth -- but vertically sharp-- interfaces. Unlike other reaction diffusion systems, where lacking non-uniform boundary conditions the interface between equilibria moves until one state dominates all others, the new system favors solutions converging to states of similar sized support.

The evolution ascends a Lyapunov function consisting of a regularized log-likelihood of the data. Notably, the data itself appears only in the regularizing, diffusive terms. Even though maximum likelihood motivates the development of the methodology and the construction of its Lyapunov function, its final, non-parametric form makes no assumption about the distributions underlying the data, bypassing density estimation altogether. The only external data required is one or more notions of distance between points. Thus the algorithm is indifferent to the dimensionality of the underlying space, scaling only with the number of sample points available. This scaling is made close to linear by keeping only near neighbors in the matrix that organizes the data into a weighted graph.

The possibility to aggregate various notions of distance makes the methodology very flexible. In particular, the article shows how it leads to a natural algorithm for the clustering of time series, and in a synthetic medical example how it addresses the curse of dimensionality in a natural way by dividing the variables among different diffusive networks. In the continuous limit, this results in a set of novel, non-local, diffusive operators, which relax the solution toward averages that are local in some dimensions but global in the others.


\bibliographystyle{plain}

\end{document}